\theoremstyle{plain}
\newtheorem{theorem}{Theorem}
\newtheorem{corollary}[theorem]{Corollary}
\newtheorem{proposition}[theorem]{Proposition}
\newtheorem{lemma}[theorem]{Lemma}
\newtheorem{assumption}{Assumption}
\newtheorem{example}[theorem]{Example}
\theoremstyle{remark}
\newtheorem{remark}[theorem]{Remark}
\newcommand{\ud}{\mathrm{d}} % Differential
\newcommand{\R}{\mathbb{R}}  % Real numbers
\newcommand{\N}{\mathbb{N}}  % Natural numbers
\renewcommand{\P}{\mathbb{P}} % Probability
\newcommand{\E}{\mathbb{E}}  % Expectation
\newcommand{\uarg}{\;\cdot\;} % Unnamed argument
\newcommand{\charfun}[1]{\mathbf{1}\left(#1\right)} % Characteristic function
\newcommand{\X}{\mathbb{X}}  % State space
\newcommand{\Sp}{\mathbb{S}}  % Adaptation state space
\newcommand{\sigmaX}{\mathcal{X}}
\newcommand{\sigmaSp}{\mathcal{S}}
\newcommand{\F}{\mathcal{F}}  % Filtration
\newcommand{\given}{\;:\;}
\newcommand{\osc}{\mathrm{osc}}
\newcommand{\tv}{\mathrm{tv}}
\newcommand{\aseq}{\overset{\mathrm{a.s}}{=}}
\newcommand{\hlfirst}[1]{{\color{blue}#1}}
\newcommand{\hlsecond}[1]{{\color{red}#1}}
\definecolor{lightblue}{rgb}{0.7,0.7,1}
\newcommand{\braces}[1]{\lbrace #1 \rbrace}
\newcommand{\biggbraces}[1]{\bigg\lbrace #1 \bigg\rbrace}
\title[Invitation to adaptive MCMC convergence theory]{An invitation to adaptive Markov chain Monte Carlo convergence theory}
\author{Pietari Laitinen$^1$}
\author{Matti Vihola$^1$}
\address{$^1$ Department of Mathematics and Statistics, University of Jyväskylä, Finland}
\begin{document}

\begin{abstract}
   Adaptive Markov chain Monte Carlo (MCMC) algorithms, which automatically tune their parameters based on past samples, have proved extremely useful in practice. The self-tuning mechanism makes them `non-Markovian', which means that their validity cannot be ensured by standard Markov chains theory. Several different techniques have been suggested to analyse their theoretical properties, many of which are technically involved. The technical nature of the theory may make the methods unnecessarily unappealing. We discuss one technique---based on a martingale decomposition---with uniformly ergodic Markov transitions. We provide an accessible and self-contained treatment in this setting, and give detailed proofs of the results discussed in the paper, which only require basic understanding of martingale theory and general state space Markov chain concepts. We illustrate how our conditions can accomodate different types of adaptation schemes, and can give useful insight to the requirements which ensure their validity.
\end{abstract}

\subjclass{
   60J22, % Computational methods in Markov chains
   65C05, % Monte Carlo methods
   60J05  % Discrete-time Markov processes on general state spaces
}

\keywords{adaptive, Convergence, Markov chain Monte Carlo, martingale}

\maketitle

\section{Introduction}

The adaptive Metropolis (AM) algorithm \citep{saksman-am} was the first instance of a class of algorithms which are known as adaptive Markov chain Monte Carlo (MCMC). The AM algorithm calculates an estimate of the target distribution's covariance matrix recursively, and uses it within the random-walk proposal distribution. The AM algorithm is, in many ways, prototypical adaptive MCMC: past simulated samples are used `as samples from the target' to progressively learn (and optimise) an easily estimatable proxy, which is used to improve the sampler's mixing. In the case of the AM, the celebrated optimal scaling result \citep{roberts-gelman-gilks-scaling} suggests that proposal covariance proportional to the target covariance should imply good mixing; recent analysis \citep{andrieu-lee-power-wang} consolidates this in terms of spectral gaps.

Since the AM algorithm, a number of adaptive MCMC algorithms have been introduced. For instance, random-walk Metropolis (RWM) proposal scale can be adapted based on the acceptance rate \citep{andrieu-robert} and the scale adaptation can be combined with covariance learning \citep{andrieu-thoms}. Alternatively, the target covariance can be learnt by a directional acceptance rate rule \citep{vihola-ram}. The RWM adaptation has been used together with delayed rejection \citep{haario-laine-mira-saksman} and component/block-wise sampling \citep{saksman-jrstatsoc}. Adaptation has been applied in other MCMC updates, for instance including automatic temperature selection in parallel tempering \citep{miasojedow-moulines-vihola}, scanning probabilities of random scan Gibbs samplers \citep{latuszynski-roberts-rosenthal}, with Langevin-type proposals \citep{marshall-roberts}, within a conditional particle filter's initialisation \citep{karppinen-vihola}, choosing tolerances in approximate Bayesian computing (ABC) MCMC \citep{vihola-franks-abc}, with reversible jump transitions \citep{tian-lee-zhou} and using reinforcement learning to learn proposals \citep{wang-chen-kanagawa-oates}. Adaptation has been suggested also with parallel Markov chains and to use different strategies in different regions of space \citep{craiu-rosenthal-yang}, specifically for multimodal distributions \citep{pompe-holmes-latuszynski} and for Bayesian variable selection \citep{griffin-latuszynski-steel}.

In the original paper \cite{saksman-am}, the validity of the AM algorithm (a strong law of large numbers) was established using mixingales, a from of `approximate martingales'. The mixingale approach was extended and abstracted further in \cite{atchade-rosenthal}. Martingale decomposition (or Poisson equation) approach was introduced in \cite{andrieu-moulines} for the analysis of adaptive MCMC, which use `stochastic gradient type' learning rules. Coupling technique was formalised in \cite{roberts-rosenthal} to analyse adaptive MCMC (convergence in law and weak law of large numbers). Another martingale approach, based on resolvents, was used in \cite{atchade-fort}. Recently, \cite{chimisov-latuszynski-roberts} suggested to replace continuous adaptation by increasingly rare updates of the parameter, and analysed the algorithms with regeneration arguments; see also the recent work \cite{hofstadler-latuszynski-roberts-rudolf} who investigate such algorithms using a similar martingale decomposition as the present paper.

We present a simple but broadly applicable framework for adaptive MCMC (Section \ref{sec:framework}), and write a decomposition of the ergodic averages into three terms, following the approach in \citep{andrieu-moulines}. We then discuss how two of the terms in this decomposition can be controlled under a so-called \emph{simultaneous uniform ergodicity} condition (Section \ref{sec:uniform}). The remaining term, which captures the `perturbation' caused by changing Markov transitions due to adaptation, can be controlled under additional assumptions, which reflect different types of adaptation mechanisms. The abstract assumption we adopt is \emph{waning adaptation}, which leads to weak/strong laws of large numbers and central limit theorems (Section \ref{sec:waning}). 

We then turn to different types of adaptation dynamics and conditions related to them. A common recipe to construct adaptive MCMC algorithms is based on stochastic gradient type (or Robbins--Monro stochastic approximation) updates (Section \ref{sec:sa}), which can often be shown to satisfy waning adaptation---assuming a suitable continuity of the Markov updates. An alternative strategy, which avoids such continuity constraints, is based on increasingly rare adaptation introduced in \citep{chimisov-latuszynski-roberts}, which also implies waning adaptation (Section \ref{sec:air}). We then discuss how \emph{independent} adaptation, leading effectively into inhomogeneous Markov chains, fit our framework (Section \ref{sec:inhomog}). We briefly discuss some extensions and generalisations of the results presented in the paper (Section \ref{sec:extensions}) and conclude with a discussion (Section \ref{sec:discussion}).

\section{Generic adaptive MCMC and a martingale decomposition}
\label{sec:framework}

Throughout the paper, we assume that $\pi$ is a probability distribution on a general state space $\X$, and $\{P_s\}_{s\in\Sp}$ is a family of Markov transition probabilities on $\X$ indexed by a parameter $s\in\Sp$, which will be chosen adaptively. For each $s\in\Sp$, the transition $P_s$ is assumed to be $\pi$-invariant.

\begin{remark}
   \label{rem:measurability}
To be precise, we assume that $\Sp$ and $\X$ are measurable spaces, equipped with $\sigma$-algebras $\sigmaSp$ and $\sigmaX$, respectively, and product spaces are equipped with the corresponding product $\sigma$-algebras. The family $\{P_s\}_{s\in\Sp}$ must satisfy the following (non-restrictive) regularity condition: $(s,x) \mapsto P_s(x,A)$ is $\sigmaSp\otimes\sigmaX$-measurable for all $A\in\sigmaX$, which ensures that $\big((s,x), A\big) \mapsto P_s(x,A)$ defines a probability kernel from $\Sp\times\X$ to $\X$.
\end{remark}

The abstract adaptive MCMC is a $\Sp\times\X$-valued stochastic process $(S_k,X_k)_{k\ge 0}$, which satisfies the following assumption:
\begin{assumption}
   \label{a:markov}
   $(\F_k)_{k\ge 0}$ are a filtration such that $(S_k,X_k)_{k\ge 0}$ is $(\F_k)_{k\ge 0}$-adapted\footnote{that is, $\F_k$ are increasingly refinbed $\sigma$-algebras $\F_k\subset\F_{k+1}$ such that $(S_k,X_k)$ are $\F_k$-measurable.}, and the following holds for all $k\ge 0$ and all measurable $A\subset\X$:
   $$
   \P(X_{k+1}\in A\mid \F_k) = P_{S_k}(X_k,A), \qquad \text{a.s.}
   $$
\end{assumption}

Assumption \ref{a:markov} abstracts and generalises iterative algorithms with updates of the following form \citep[cf.][]{roberts-rosenthal}:
\begin{itemize}
   \item sample $X_{k+1} \sim P_{S_k}(X_k, \uarg)$, and
   \item update $S_{k+1}$ based on $X_0,\ldots,X_{k+1}$ and $S_k$.
\end{itemize}
If the algorithm is exactly of this type, we may take $(\F_k)$ as the natural filtration of $(S_k,X_k)$ in \ref{a:markov}. However, in many adaptive MCMC algorithms, sampling might involve a number of auxiliary variables which are used in the adaptation step. The filtration $\F_k$ can accomodate the information about such auxiliary random variables, without imposing any unnecessary constraints or complications to the definition of the Markov transitions $P_s$.

We adopt the common notation where $\pi(f) = \int f(x) \pi(\ud x)$ and $(Pf)(x) = \int P(x,\ud y) f(y)$, whenever well-defined, and
$L^1(\pi)$ stands for the set of all measurable functions $f:\X\to\R$ which are $\pi$-integrable.  
In what follows, we assume that $\varphi\in L^1(\pi)$ is a fixed test function, and investigate the convergence properties of the averages
\begin{equation}
\frac{1}{n}\sum_{k=1}^n \varphi(X_k) \xrightarrow{n\to\infty} \pi(\varphi) ,
\label{eq:lln}
\end{equation}
either in probability (weak law of large numbers) or almost surely (strong law of large numbers).

The following is an abstract assumption, which is central to our analysis:
\begin{assumption}[Solutions of the Poisson equation]
   \label{a:poisson}
There exists a measurable mapping $(s,x)\mapsto g_s(x)$ from $\Sp\times\X$ to $\R$ which satisfies the following:
$$
   g_s(x) - (P_s g_s)(x) = \varphi(x) - \pi(\varphi) \qquad \text{for all $x\in\X$ and $s\in\Sp$}.
$$
\end{assumption}
It turns out that such a function exists under general conditions (see Theorem \ref{thm:bounded} in Section \ref{sec:uniform}).
Having \ref{a:poisson} allows us to investigate \eqref{eq:lln} in terms of the following decomposition:
\begin{align}
   \sum_{k=1}^n \big[\varphi(X_k) - \pi(\varphi)\big] 
   &= 
   \sum_{k=1}^n \big[g_{S_k}(X_k) - P_{S_k}g_{S_k}(X_k)\big] \label{eq:main-decomposition} \\
   &= 
   \sum_{k=1}^n \big[\hlfirst{g_{S_{k-1}}(X_k)} - \hlsecond{P_{S_{k-1}}g_{S_{k-1}}(X_{k-1})}\big] 
   & \big(=M_n\big) 
   \nonumber\\
   & + 
   \sum_{k=1}^n \big[g_{S_k}(X_k) - \hlfirst{g_{S_{k-1}}(X_k)}\big] 
   & \big(=A_n\big) 
   \nonumber\\
   & + 
   \sum_{k=1}^n \big[\hlsecond{P_{S_{k-1}}g_{S_{k-1}}(X_{k-1})} - P_{S_k}g_{S_k}(X_k)\big] &\big(=R_n\big),
   \nonumber
\end{align}
where the colors indicate the matching terms which have been added and subtracted. With this decomposition, we may write
$$
\frac{1}{n}\sum_{k=1}^n \varphi(X_k) - \pi(\varphi) = \frac{M_n}{n} + \frac{A_n}{n} + \frac{R_n}{n},
$$
and we detail later how each of these terms can be controlled under additional assumptions.

Before that, let us discuss their intuitive role and typical behaviour. The last `remainder` term $R_n$ is a telescoping sum, which equals
\begin{equation}
   R_n =  P_{S_0}g_{S_0}(X_0) - P_{S_n} g_{S_n}(X_n),
   \label{eq:telescopic}
\end{equation}
and therefore, as long as $P_{S_k}g_{S_k}(X_k)$ are stable in $k$, the contribution of $R_n/n$ will be negligible for large $n$.

The second `adaptation' term $A_n$ contains the `disturbances' caused by the adaptation, because of the changing Markov transitions. If the adaptation slows down, typically $A_n/n$ vanishes as $n$ increases. Note also that in the case of a standard (non-adaptive) MCMC, where the terms $S_k = S_0\equiv s$ for all $k$, we have $A_n=0$. 

Finally, it is not hard to verify that (whenever well-defined and suitably regular) the conditional expectation of the $k$th term of $M_n$ with respect to $\F_{k-1}$ is zero, and therefore $M_n$ is a zero-mean martingale. It captures the bulk of the random fluctuations, and $M_n/n\to 0$ often follows easily from a martingale convergence theorem.% (Lemma \ref{lem:control-RM} in Section \ref{sec:uniform}).

In fact, the martingale terms $M_n$ can satisfy a central limit theorem $M_n/\sqrt{n} \to N(0,\sigma_\varphi^2)$ in distribution for some $\sigma_\varphi^2\in[0,\infty)$. Therefore, if $R_n/\sqrt{n}$ and $A_n/\sqrt{n}$ vanish, we obtain a central limit theorem:% (Lemma \ref{lem:martingale-term-clt}  in Section \ref{sec:uniform}):
\begin{equation}
   \frac{1}{\sqrt{n}}\sum_{k=1}^n \big[\varphi(X_k) - \pi(\varphi)\big] = \frac{M_n}{\sqrt{n}} + \frac{A_n}{\sqrt{n}} + \frac{R_n}{\sqrt{n}} \xrightarrow{n\to\infty} N(0,\sigma_\varphi^2).
   \label{eq:clt}
\end{equation}

\section{Simultaneous uniform ergodicity}
\label{sec:uniform}

For the sake of exposition, we focus here on the case where the Markov transitions $P_s$ are uniformly ergodic. Many practical MCMC algorithms, such as the random-walk Metropolis, are uniformly ergodic when used in a compact state space, but some algorithms can be uniformly ergodic on non-compact spaces, too \citep[e.g.][]{andrieu-lee-vihola}. We discuss later how some of the results generalise to other forms of ergodicity (Section \ref{sec:non-uniform}).

In what follows, $L^\infty$ stands for the bounded measurable functions $f:\X\to\R$, and we denote $\| f \|_\infty = \sup_x |f(x)|$ and $\osc(f) = \sup_x f(x) - \inf_x f(x)$. Bounded functions that have zero mean under $\pi$ are denoted $L_0^\infty(\pi) = \{f\in L^\infty\given \pi(f)=0\}$ and the $\pi$-centred version of $f\in L^\infty$ is denoted by $\bar{f}(x) = f(x) - \pi(f) \in L_0^\infty(\pi)$. The total variation distance for two probability measures $\mu,\nu$ on $\X$ is defined as 
$$
d_\tv(\mu,\nu) 
= \sup_A | \mu(A)-\nu(A)| 
= \frac{1}{2}\sup_{\|f\|_\infty \le 1} |\mu(f)-\nu(f)|
= \sup_{\mathrm{osc}(f)\le 1} |\mu(f) - \nu(f)|.
$$

The following condition \cite[cf.][]{saksman-am,roberts-rosenthal} ensures that the Markov transitions $P_s$ are uniformly ergodic with the same constants.
\begin{assumption}[Simultaneous uniform ergodicity]
   \label{a:simultaneous-uniform}
   There exist constants $C<\infty$ and $\rho\in[0,1)$ such that for all $x\in\X$, $s\in\Sp$ and $k\ge 0$:
$$
d_\tv(P_s^k(x, \uarg), \pi)
\le C \rho^k.
$$
\end{assumption}
Theorem \ref{thm:bounded} below shows that \ref{a:simultaneous-uniform} implies \ref{a:poisson}, which we precede by the following well-known result about how the mappings $P_s^k:L_0^\infty(\pi)\to L_0^\infty(\pi)$ behave under \ref{a:simultaneous-uniform}.  

\begin{lemma}\label{lemma:bounded}
Suppose \ref{a:simultaneous-uniform} holds. Let $f\in L_0^\infty(\pi)$, $k\in \N$ and $s\in \Sp$, then $P_s^kf\in L_0^\infty(\pi)$ and
   $$
     \| P_s^k f \|_\infty \le C \rho^k \osc ( f) \le 2C \rho^k \| f\|_\infty. 
   $$
\end{lemma}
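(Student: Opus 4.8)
The plan is to prove the two claims of the Lemma—that $P_s^k f$ remains in $L_0^\infty(\pi)$ and satisfies the stated bound—by connecting the action of $P_s^k$ on a centred bounded function directly to the total variation distance controlled by Assumption \ref{a:simultaneous-uniform}.

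First I would establish that $P_s^k f \in L_0^\infty(\pi)$. Boundedness is immediate since $\|P_s^k f\|_\infty \le \|f\|_\infty$ (a Markov kernel is a contraction on $L^\infty$, as $|P_s^k f(x)| \le \int P_s^k(x,\ud y)|f(y)| \le \|f\|_\infty$). The zero-mean property follows from $\pi$-invariance of $P_s$: iterating $\pi(P_s h) = \pi(h)$ gives $\pi(P_s^k f) = \pi(f) = 0$. So the only real content is the oscillation bound.

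For the main bound, the key observation is that because $\pi$ is $P_s$-invariant, we have $\pi(f) = \int P_s^k(x,\ud y) \, \pi(\ud x)\cdots$, or more usefully, $\pi(f) = (\pi P_s^k)(f)$, so that for each fixed $x$ we may write
\begin{equation}
P_s^k f(x) = P_s^k f(x) - \pi(f) = \int f(y)\, P_s^k(x,\ud y) - \int f(y)\, \pi(\ud y) = P_s^k(x,\uarg)(f) - \pi(f). \nonumber
\end{equation}
This expresses $P_s^k f(x)$ as the difference of the integrals of the single function $f$ against the two measures $P_s^k(x,\uarg)$ and $\pi$. I would then invoke the oscillation characterisation of total variation quoted just before the Lemma, namely $d_\tv(\mu,\nu) = \sup_{\osc(h)\le 1}|\mu(h)-\nu(h)|$. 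Applying this with $h = f/\osc(f)$ (assuming $\osc(f) > 0$; the degenerate case $\osc(f)=0$ means $f$ is a.e.\ constant, hence $f\equiv 0$ by zero mean, and the bound is trivial) gives $|P_s^k(x,\uarg)(f) - \pi(f)| \le \osc(f)\, d_\tv(P_s^k(x,\uarg),\pi)$. Combining with Assumption \ref{a:simultaneous-uniform} yields $|P_s^k f(x)| \le \osc(f)\, C\rho^k$ uniformly in $x$, which is the first inequality. The second inequality follows from the elementary fact $\osc(f) \le 2\|f\|_\infty$.

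The main (and only genuine) obstacle is correctly matching the form of the total variation characterisation to the expression for $P_s^k f$: one must use the $\osc$-form rather than the $\|\cdot\|_\infty$-form to get the constant right, since it is the oscillation that appears in the bound, not the sup-norm. Everything else—invariance, contraction, the $\osc(f) \le 2\|f\|_\infty$ comparison—is routine, so the proof should be short.
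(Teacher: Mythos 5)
Your proof is correct and follows essentially the same route as the paper's: both reduce to the normalised function $f/\osc(f)$ (after disposing of the trivial case $f\equiv 0$), invoke the oscillation characterisation of total variation together with Assumption \ref{a:simultaneous-uniform}, and obtain the zero-mean property of $P_s^k f$ from $\pi$-invariance. The only point you gloss over is the measurability of $x\mapsto P_s^k f(x)$, which is part of the claim $P_s^k f\in L_0^\infty(\pi)$ and which the paper settles by citing its monotone-class argument (Lemma \ref{lem:gs-measurability}).
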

\begin{proof}
The result holds trivially for $f\equiv 0$. Otherwise, let $h=f/\osc(f)$, for which $\osc(h) = 1$, and we may write
\begin{align*}
\sup_{x\in \X}|P_s^k f(x)|
&= \osc(f) \sup_{x\in \X} |P_s^kh(x)-\pi(h)| \\
&\le \osc(f) \sup_{x\in \X}\sup_{\osc(g) \le 1} |P_s^kg(x)-\pi(g)| \\
&=\osc(f) \sup_{x\in \X}d_\tv\big(P_s^k(x,\uarg), \pi\big),
\end{align*}
from which the first inequality follows. The latter follows because $\pi(f)=0$ implies $\inf_x f(x)\le 0 \le \sup_x f(x)$. We have $\pi(P_s^k f)= \pi(f)=0$, since $P_s$ is $\pi$-invariant, and therefore $P_s^kf\in L_0^\infty(\pi)$ by Lemma \ref{lem:gs-measurability} in Appendix \ref{app:measurability}. 
\end{proof}

\begin{theorem}\label{thm:bounded}
   Let $\varphi\in L^\infty$ and assume \ref{a:simultaneous-uniform}. Then, \ref{a:poisson} holds with
   \begin{equation}
   g_s(x)
   = \sum_{k=0}^\infty P_s^k \bar{\varphi}(x).
   \label{eq:poisson-neumann}
   \end{equation}
   Furthermore, for all $s\in\Sp$ it holds that $g_s\in L_0^\infty(\pi)$ and
   $$
      \|g_s\|_\infty \le \frac{C}{1-\rho} \osc(\varphi).
   $$
\end{theorem}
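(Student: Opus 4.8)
The plan is to make Lemma~\ref{lemma:bounded} do all the heavy lifting: it bounds every summand of the Neumann-type series \eqref{eq:poisson-neumann} geometrically, which simultaneously yields convergence, the norm estimate, and (after an index shift) the Poisson equation itself. First I would record that $\bar{\varphi} = \varphi - \pi(\varphi) \in L_0^\infty(\pi)$ and that $\osc(\bar{\varphi}) = \osc(\varphi)$, since subtracting a constant does not change the oscillation. Applying Lemma~\ref{lemma:bounded} to $f = \bar{\varphi}$ gives $\|P_s^k\bar{\varphi}\|_\infty \le C\rho^k\osc(\varphi)$ for every $k\ge 0$ and $s\in\Sp$; note this bound holds at $k=0$ as well, because Assumption~\ref{a:simultaneous-uniform} is stated for all $k\ge 0$ and the argument of Lemma~\ref{lemma:bounded} uses only that ergodicity bound. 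Since $\rho\in[0,1)$, the majorant $\sum_{k\ge 0} C\rho^k\osc(\varphi)$ is a convergent geometric series, so the partial sums $g_s^{(N)}(x) = \sum_{k=0}^N P_s^k\bar{\varphi}(x)$ are uniformly Cauchy in $x$ (and uniformly in $s$). Hence $g_s(x)$ is well defined as a uniform limit of bounded functions, and summing the bound term-by-term gives at once
\[
\|g_s\|_\infty \le \sum_{k=0}^\infty \|P_s^k\bar{\varphi}\|_\infty \le \sum_{k=0}^\infty C\rho^k\osc(\varphi) = \frac{C}{1-\rho}\osc(\varphi),
\]
which is the claimed estimate.

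Next I would check $g_s\in L_0^\infty(\pi)$. Boundedness is already in hand from the display above. For the zero-mean property, each summand satisfies $\pi(P_s^k\bar{\varphi}) = \pi(\bar{\varphi}) = 0$ by $\pi$-invariance of $P_s$, and since the series converges uniformly with a constant summable majorant, dominated convergence lets me interchange $\pi$ with the infinite sum to get $\pi(g_s) = 0$. Joint measurability of $(s,x)\mapsto g_s(x)$ follows because it is a pointwise limit of the maps $(s,x)\mapsto g_s^{(N)}(x)$, each measurable by the kernel regularity of Remark~\ref{rem:measurability}, together with the centring bookkeeping of Lemma~\ref{lem:gs-measurability} in Appendix~\ref{app:measurability}.

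Finally, for the Poisson equation the key identity is that applying $P_s$ to the series shifts the index: $P_s g_s = \sum_{k=0}^\infty P_s^{k+1}\bar{\varphi} = \sum_{k=1}^\infty P_s^k\bar{\varphi} = g_s - \bar{\varphi}$, whence $g_s - P_s g_s = \bar{\varphi} = \varphi - \pi(\varphi)$, exactly as required by \ref{a:poisson}.

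I expect the only step needing genuine care to be the interchange $P_s\sum_k = \sum_k P_s$ used in the last paragraph, equivalently pulling the limit $N\to\infty$ through the integral defining the kernel $P_s$. This is legitimate precisely because the partial sums converge uniformly and are dominated by the constant $\frac{C}{1-\rho}\osc(\varphi)$, so bounded convergence applies to the probability kernel $P_s$; I would state this explicitly rather than leave it implicit, since it is the one place where convergence of the series is actually invoked to justify a manipulation.
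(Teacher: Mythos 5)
Your proof is correct and follows essentially the same route as the paper: geometric bounds from Lemma~\ref{lemma:bounded} give convergence of the series, the norm estimate, and $\pi(g_s)=0$ by dominated convergence. You are in fact slightly more complete than the paper's own proof, which leaves the index-shift verification of the Poisson equation $g_s - P_s g_s = \bar{\varphi}$ (and the interchange of $P_s$ with the infinite sum that it requires) implicit, whereas you justify it explicitly via bounded convergence for the kernel.
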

\begin{proof}
Each term of the sum is measurable by Lemma \ref{lem:gs-measurability} in Appendix \ref{app:measurability}. By Lemma \ref{lemma:bounded}
\begin{equation*}
\sum_{k=0}^\infty \|P_s^k\bar{\varphi}\|_\infty
\le \sum_{k=0}^\infty \osc(\varphi) C\rho^k=
\osc(\varphi) \frac{C}{1-\rho},
\end{equation*}
which ensures that \eqref{eq:poisson-neumann} converges and is measurable and
\begin{equation*}
\|g_s\|_\infty
=\bigg\|\sum_{k=0}^\infty P_s^k\bar{\varphi}\bigg\|_\infty\le \osc(\varphi) \frac{C}{1-\rho}.
\end{equation*}
Thus by the dominated convergence theorem
\begin{equation*}
\int g_s(x) \pi(\ud x)= \sum_{k=0}^\infty \int P_s^k \bar{\varphi}(x) \pi(\ud x)=0.
\qedhere
\end{equation*}
\end{proof}

For the rest of this section, we assume that both \ref{a:markov} and \ref{a:simultaneous-uniform} hold, and $g_s$ in \ref{a:poisson} are as in Theorem \ref{thm:bounded}.

\begin{lemma}
   \label{lemma:martingale}
Let $g_s$ be defined as in \eqref{eq:poisson-neumann}
then for all $k\ge 1$, the terms
$$
   \Delta_k = g_{S_{k-1}}(X_k) - P_{S_{k-1}}g_{S_{k-1}}(X_{k-1})
$$
are bounded, $\F_k$-measurable and satisfy:
\begin{align*}
      \E[\Delta_k \mid \mathcal{F}_{k-1}] & \aseq  0 \\
      \E[\Delta_k^2 \mid \mathcal{F}_{k-1}] & \aseq 
      P_{S_{k-1}} g_{S_{k-1}}^2 (X_{k-1}) - (P_{S_{k-1}} g_{S_{k-1}})^2(X_{k-1}).
\end{align*}
\end{lemma}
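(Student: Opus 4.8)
The plan is to reduce both identities to a single conditional-expectation formula and then specialise it. Throughout I would write $s = S_{k-1}$ and $x = X_{k-1}$ for brevity, noting that both are $\F_{k-1}$-measurable, hence also $\F_k$-measurable.

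First I would dispose of boundedness and measurability. By Theorem \ref{thm:bounded} we have $\|g_s\|_\infty \le \frac{C}{1-\rho}\osc(\varphi)$ uniformly in $s$, and since $P_s$ is a probability kernel it is a sup-norm contraction, so $\|P_s g_s\|_\infty \le \|g_s\|_\infty$. Thus $|\Delta_k| \le \frac{2C}{1-\rho}\osc(\varphi)$, which gives boundedness. For measurability, the map $(s,y)\mapsto g_s(y)$ is $\sigmaSp\otimes\sigmaX$-measurable (Lemma \ref{lem:gs-measurability}), so $g_{S_{k-1}}(X_k)$ is $\F_k$-measurable as the composition of this map with the $\F_k$-measurable pair $(S_{k-1},X_k)$; the subtracted term $P_{S_{k-1}}g_{S_{k-1}}(X_{k-1})$ is even $\F_{k-1}$-measurable. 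Hence $\Delta_k$ is $\F_k$-measurable.

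The heart of the argument is the identity
\begin{equation}
\E\big[h(S_{k-1},X_k)\mid\F_{k-1}\big] \aseq \int_\X h(S_{k-1},y)\, P_{S_{k-1}}(X_{k-1},\ud y),
\label{eq:key-ce}
\end{equation}
valid for every bounded $\sigmaSp\otimes\sigmaX$-measurable $h:\Sp\times\X\to\R$. I would establish this by a functional monotone class argument. For a product indicator $h(s,y)=\charfun{s\in B}\charfun{y\in A}$ with $B\in\sigmaSp$, $A\in\sigmaX$, the left-hand side equals $\charfun{S_{k-1}\in B}\,\E[\charfun{X_k\in A}\mid\F_{k-1}]$ because $S_{k-1}$ is $\F_{k-1}$-measurable; by Assumption \ref{a:markov} (with index shifted to $k-1$) this is $\charfun{S_{k-1}\in B}\,P_{S_{k-1}}(X_{k-1},A)$, which is precisely the right-hand side of \eqref{eq:key-ce} for this $h$. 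Since such product indicators are stable under multiplication and generate $\sigmaSp\otimes\sigmaX$, the identity extends by linearity to simple functions and then, by the functional form of the monotone class theorem, to all bounded $\sigmaSp\otimes\sigmaX$-measurable $h$.

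Finally I specialise \eqref{eq:key-ce}. Taking $h(s,y)=g_s(y)$ yields $\E[g_{S_{k-1}}(X_k)\mid\F_{k-1}] = P_{S_{k-1}}g_{S_{k-1}}(X_{k-1})$, and since the subtracted term of $\Delta_k$ is $\F_{k-1}$-measurable, subtracting gives $\E[\Delta_k\mid\F_{k-1}]\aseq 0$. Taking $h(s,y)=g_s(y)^2$, which is again bounded and jointly measurable, gives $\E[g_{S_{k-1}}(X_k)^2\mid\F_{k-1}] = P_{S_{k-1}}g_{S_{k-1}}^2(X_{k-1})$. Writing $m = P_{S_{k-1}}g_{S_{k-1}}(X_{k-1})$ (which is $\F_{k-1}$-measurable) and expanding $\Delta_k^2 = g_{S_{k-1}}(X_k)^2 - 2m\,g_{S_{k-1}}(X_k) + m^2$, the conditional expectation collapses the cross term via the first identity, leaving $P_{S_{k-1}}g_{S_{k-1}}^2(X_{k-1}) - m^2$, which is the claimed expression for $\E[\Delta_k^2\mid\F_{k-1}]$. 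The main obstacle is the rigorous justification of \eqref{eq:key-ce}: because both the kernel index $S_{k-1}$ and the starting point $X_{k-1}$ are random (albeit $\F_{k-1}$-measurable), one cannot simply freeze them and invoke Assumption \ref{a:markov} pointwise, and it is exactly the monotone class passage from product indicators to the jointly measurable integrands $g_s(y)$ and $g_s(y)^2$ that makes the step legitimate, relying on the measurability hypotheses of Remark \ref{rem:measurability} and Lemma \ref{lem:gs-measurability}.
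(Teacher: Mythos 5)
Your proposal is correct and follows essentially the same route as the paper: the paper also reduces both identities to the conditional-expectation formula $\E[h(S_{k-1},X_k)\mid\F_{k-1}] \aseq \int P_{S_{k-1}}(X_{k-1},\ud y)\,h(S_{k-1},y)$ (stated as Lemma \ref{lem:h-measurable} in Appendix \ref{app:measurability}, proved there by exactly the monotone class argument over product rectangles that you sketch) and then specialises to $h(s,y)=g_s(y)$ and expands $\Delta_k^2$ to obtain the conditional variance. Your treatment of boundedness and $\F_k$-measurability matches what the paper dismisses as ``direct,'' so there is nothing to add.
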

\begin{proof}
The boundedness and measurability are direct and 
\begin{align*}
\E[\Delta_k \mid \mathcal{F}_{k-1}]
&=\E[g_{S_{k-1}}(X_k) \mid \mathcal{F}_{k-1}]-P_{S_{k-1}}g_{S_{k-1}}(X_{k-1}), %\\
\end{align*}
where $\E[g_{S_{k-1}}(X_k) \mid \mathcal{F}_{k-1}] = P_{S_{k-1}}g_{S_{k-1}}(X_{k-1})$ (a.s.)
by Lemma \ref{lem:h-measurable} in Appendix \ref{app:measurability}, which proves the first claim.

Similarly, we may calculate (a.s.)
\begin{align*}
&\E[\Delta_k^2 \mid \mathcal{F}_{k-1}] \\
&=\E\big[g_{S_{k-1}}^2(X_k) - 2g_{S_{k-1}}^2(X_k)P_{S_{k-1}}g_{S_{k-1}}(X_{k-1})+(P_{S_{k-1}}g_{S_{k-1}})^2(X_{k-1}) \mid \mathcal{F}_{k-1}\big] \\ 
&=\E[g_{S_{k-1}}^2(X_k) \mid \mathcal{F}_{k-1}]- 2P_{S_{k-1}}g_{S_{k-1}}(X_{k-1})\E[g_{S_{k-1}}(X_k) \mid \mathcal{F}_{k-1}]+(P_{S_{k-1}}g_{S_{k-1}})^2(X_{k-1}) \\ 
&=P_{S_{k-1}}g_{S_{k-1}}^2(X_k)- 2(P_{S_{k-1}}g_{S_{k-1}})^2(X_{k-1})+(P_{S_{k-1}}g_{S_{k-1}})^2(X_{k-1}) \\ 
&=P_{S_{k-1}}g_{S_{k-1}}^2(X_k)-(P_{S_{k-1}}g_{S_{k-1}})^2(X_{k-1}). \qedhere
\end{align*}
\end{proof}

Lemma \ref{lemma:martingale} establishes that $(\Delta_k)_{k\ge 1}$ are bounded martingale differences with respect to $(\F_k)_{k\ge 1}$, and therefore $M_n = \sum_{k=1}^n \Delta_k$ is a martingale. 

The following lemma ensures that the terms $R_n/n$ and $M_n/n$ vanish, as required by a law of large numbers. The term $R_n/\sqrt{n}$ vanishes, too, and $M_n/\sqrt{n}$ will converge to a Gaussian limit as shown in Lemma \ref{lem:martingale-term-clt}.

\begin{lemma}
   \label{lem:control-RM}
Let $g_s$ be defined as in \eqref{eq:poisson-neumann}, then the following hold as $n\to\infty$:
$$
\frac{R_n}{n} \to 0, \qquad \frac{R_n}{\sqrt{n}}\to 0 \qquad \text{and}\qquad \frac{M_n}{n} \to 0,\qquad\text{(a.s.)}
$$
\end{lemma}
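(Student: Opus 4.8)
The plan is to treat the remainder term $R_n$ and the martingale term $M_n$ separately: the former reduces to a uniform bound, while the latter requires a martingale strong law of large numbers.

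For $R_n$, I would use the telescoping identity \eqref{eq:telescopic}, namely $R_n = P_{S_0}g_{S_0}(X_0) - P_{S_n}g_{S_n}(X_n)$. By Theorem \ref{thm:bounded}, each $g_s$ satisfies $\|g_s\|_\infty \le \frac{C}{1-\rho}\osc(\varphi)$, and since every $P_s$ is a Markov kernel it contracts the supremum norm, so $\|P_s g_s\|_\infty \le \|g_s\|_\infty$. Hence $|R_n| \le 2\frac{C}{1-\rho}\osc(\varphi)$ for all $n$, and both $R_n/n \to 0$ and $R_n/\sqrt{n}\to 0$ hold deterministically (for every realisation), with no probabilistic input needed.

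For $M_n/n$, I would exploit that by Lemma \ref{lemma:martingale} the increments $(\Delta_k)$ are martingale differences bounded by $D := 2\frac{C}{1-\rho}\osc(\varphi)$, so that $\E[\Delta_k^2\mid\F_{k-1}] \le D^2$ and, by orthogonality, $\E[M_n^2] = \sum_{k=1}^n\E[\Delta_k^2] \le D^2 n$. This already gives $\E[(M_n/n)^2]\le D^2/n \to 0$, i.e.\ convergence in $L^2$ and in probability; the task is to upgrade this to almost sure convergence. The standard device is to pass to the dyadic times $n_j = 2^j$ and combine Doob's $L^2$ maximal inequality with the Borel--Cantelli lemma: for fixed $\epsilon>0$,
$$
\P\Big(\max_{k\le n_j}|M_k| > \epsilon\, n_{j-1}\Big)
\le \frac{4\,\E[M_{n_j}^2]}{\epsilon^2\, n_{j-1}^2}
\le \frac{16 D^2}{\epsilon^2\, 2^{j}},
$$
which is summable in $j$.

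Since the bound is summable, Borel--Cantelli gives that almost surely $\max_{k\le n_j}|M_k|\le \epsilon\, n_{j-1}$ for all large $j$; as any $n$ lies in a block $n_{j-1} < n \le n_j$, this forces $|M_n|/n \le \epsilon$ eventually, so $\limsup_n |M_n|/n \le \epsilon$ a.s., and letting $\epsilon\downarrow 0$ along a countable sequence yields $M_n/n\to 0$ a.s. The one genuine obstacle is precisely this passage from the subsequence $(n_j)$ to the full sequence: one must control the maximal fluctuation of $M$ across an entire dyadic block rather than only at its endpoints, which is exactly what Doob's maximal inequality supplies; everything else is bounded-increment bookkeeping. (Alternatively, the same conclusion follows from an off-the-shelf martingale strong law requiring only $\sum_k \E[\Delta_k^2\mid\F_{k-1}]/k^2 < \infty$ a.s., which holds here because the summand is dominated by $D^2/k^2$.)
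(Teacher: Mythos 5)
Your proof is correct, and the $R_n$ part coincides with the paper's: telescoping plus the uniform bound $\|g_s\|_\infty \le C(1-\rho)^{-1}\osc(\varphi)$ from Theorem \ref{thm:bounded} makes $R_n$ bounded uniformly in $n$, so division by $n^p$ kills it deterministically. For $M_n/n$ you take a genuinely different route. The paper forms the auxiliary martingale $V_n=\sum_{k=1}^n \Delta_k/k$, notes $\E[V_n^2]=\sum_k \E[\Delta_k^2]/k^2 \le M^2\pi^2/6$ by orthogonality of the increments, invokes the $L^2$-bounded martingale convergence theorem to get a.s.\ convergence of $V_n$, and finishes with Kronecker's lemma --- this is exactly the ``off-the-shelf martingale strong law'' you mention parenthetically at the end. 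You instead unpack that machinery by hand: the orthogonality bound $\E[M_{n_j}^2]\le D^2 2^j$ at dyadic times, Doob's $L^2$ maximal inequality to control the whole block $n_{j-1}<n\le n_j$ (your constants check out: $4 D^2 2^j/(\epsilon^2 2^{2j-2}) = 16D^2/(\epsilon^2 2^j)$, summable), Borel--Cantelli, and a countable limit in $\epsilon$. Both arguments rest on the same input, namely the uniform bound on $\|\Delta_k\|_\infty$ and orthogonality of martingale differences; the paper's version is shorter because it delegates the blocking argument to the cited convergence theorem (Lemma \ref{lem:square-martingale-conv}), whereas yours is more self-contained and makes explicit where the maximal inequality is genuinely needed to pass from the dyadic subsequence to the full sequence. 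Either is acceptable.
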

\begin{proof}
Since $R_n$ is a telescopic \eqref{eq:telescopic}, we have for $p\in\{1,1/2\}$:
$$
\bigg|\frac{R_n}{n^p}\bigg|
=\frac{1}{n^p}\big|P_{S_{n}}g_{S_{n}}(X_{n})- P_{S_0}g_{S_0}(X_0)\big|
\le \frac{1}{n^p} \big( \| P_{S_{n}}g_{S_{n}} \|_\infty + \| P_{S_0}g_{S_0} \|_\infty \big) \to 0,
$$
because the functions are uniformly bounded (Theorem \ref{thm:bounded}). 

For the remaining term, consider the following martingale:
\begin{align*}
   V_n=\sum_{k=1}^n\frac{\Delta_k}{k},
\end{align*}
where the martingale differences $\|\Delta_k\|_\infty < M$ for some constant $M<\infty$ by Theorem \ref{thm:bounded}.

Because martingale differences are orthogonal, we have
$$
   \E[V_n^2]=\sum_{k=1}^n\frac{\E[\Delta_k^2]}{k^2} \le M^2 \sum_{k=1}^\infty \frac{1}{k^2} = M^2 \frac{\pi^2}{6}.
$$
That is, $V_n$ is a $L^2$-bounded martingale, which converges to an a.s.~finite $V_\infty = \sum_{k=1}^\infty\frac{\Delta_k}{k}$ almost surely (see Lemma \ref{lem:square-martingale-conv} in Appendix \ref{app:martingales}). Whenever $V_\infty$ is finite, Kronecker's lemma implies that 
\begin{equation*}
\frac{M_n}{n}=\frac{1}{n}\sum_{k=1}^n k \frac{\Delta_k}{k}\xrightarrow{n\to\infty} 0.
\qedhere
\end{equation*}
\end{proof}

We record the following abstract assumption and a lemma which ensures a central limit theorem for the martingale term.

\begin{assumption}
   \label{a:limiting-poisson}
Suppose that there exists a constant $\sigma_\varphi^2\in[0,\infty)$ such that
$$
   \frac{1}{n} \sum_{k=1}^n 
   \big[P_{S_{k-1}} g_{S_{k-1}}^2 (X_{k-1}) - (P_{S_{k-1}} g_{S_{k-1}})^2(X_{k-1})\big] 
   \xrightarrow{n\to\infty} \sigma_\varphi^2 \qquad \text{in probability.}
$$
\end{assumption}

\begin{lemma}
   \label{lem:martingale-term-clt}
Assume that \ref{a:limiting-poisson} holds, then
$$
   \frac{M_n}{\sqrt{n}} \xrightarrow{n\to\infty} N(0,\sigma_\varphi^2) \qquad \text{in distribution.}
$$
\end{lemma}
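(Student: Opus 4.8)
The plan is to recognise $M_n/\sqrt{n} = \sum_{k=1}^n \Delta_k/\sqrt{n}$ as the terminal value of a martingale difference array and to invoke a martingale central limit theorem. I would set $\xi_{n,k} = \Delta_k/\sqrt{n}$ for $1\le k\le n$; by Lemma \ref{lemma:martingale} each $\xi_{n,k}$ is bounded, $\F_k$-measurable, and satisfies $\E[\xi_{n,k}\mid\F_{k-1}] \aseq 0$, so $(\xi_{n,k})$ is a martingale difference array with respect to the single filtration $(\F_k)$, which is trivially nested across $n$. The martingale CLT then reduces the claim to verifying two conditions: convergence of the predictable quadratic variation $\sum_{k=1}^n \E[\xi_{n,k}^2\mid\F_{k-1}]$ to $\sigma_\varphi^2$ in probability, and a conditional Lindeberg condition on the array.

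For the first condition, I would use the second identity of Lemma \ref{lemma:martingale}, which gives
$$
\sum_{k=1}^n \E[\xi_{n,k}^2\mid\F_{k-1}]
= \frac{1}{n}\sum_{k=1}^n \E[\Delta_k^2\mid\F_{k-1}]
= \frac{1}{n}\sum_{k=1}^n \big[P_{S_{k-1}}g_{S_{k-1}}^2(X_{k-1}) - (P_{S_{k-1}}g_{S_{k-1}})^2(X_{k-1})\big].
$$
This is exactly the quantity assumed in \ref{a:limiting-poisson} to converge to $\sigma_\varphi^2$ in probability, so the predictable quadratic variation condition holds by assumption, with no further work needed here.

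For the conditional Lindeberg condition, I would exploit the uniform boundedness from Theorem \ref{thm:bounded}: since $\|g_s\|_\infty \le C\osc(\varphi)/(1-\rho)$ uniformly in $s$, the differences satisfy $\|\Delta_k\|_\infty \le M$ for a single constant $M<\infty$, whence $|\xi_{n,k}| \le M/\sqrt{n}$ for all $k\le n$. Thus for any fixed $\epsilon>0$ and all $n$ large enough that $M/\sqrt{n}<\epsilon$, every indicator $\charfun{|\xi_{n,k}|>\epsilon}$ vanishes, so the conditional Lindeberg sum $\sum_{k=1}^n \E[\xi_{n,k}^2\charfun{|\xi_{n,k}|>\epsilon}\mid\F_{k-1}]$ is identically zero for large $n$ and in particular tends to $0$ in probability. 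With both hypotheses in hand, the martingale CLT yields $M_n/\sqrt{n}\to N(0,\sigma_\varphi^2)$ in distribution.

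I do not expect a genuine obstacle: the uniform boundedness trivialises the Lindeberg condition, and \ref{a:limiting-poisson} is tailored to supply the variance condition. The only point requiring care is to invoke a version of the martingale CLT whose hypotheses are phrased in terms of the predictable quadratic variation $\sum_k \E[\xi_{n,k}^2\mid\F_{k-1}]$ (rather than the realised sum of squares $\sum_k \xi_{n,k}^2$) together with the conditional Lindeberg condition, matching exactly what we verify; and to observe that because the limit $\sigma_\varphi^2$ is a deterministic constant, the general stable/mixed-normal conclusion specialises to an ordinary $N(0,\sigma_\varphi^2)$ limit.
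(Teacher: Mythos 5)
Your proposal is correct and follows essentially the same route as the paper: both reduce the claim to the martingale CLT of Hall--Heyde type, obtain the convergence of the conditional variances directly from Lemma \ref{lemma:martingale} combined with \ref{a:limiting-poisson}, and dispose of the conditional Lindeberg condition by noting that the uniform bound on $\Delta_k$ from Theorem \ref{thm:bounded} makes the truncated terms identically zero for large $n$. Your rescaling to a triangular array $\xi_{n,k}=\Delta_k/\sqrt{n}$ is only a cosmetic difference from the paper's formulation in terms of $\Delta_k$ and the threshold $\epsilon\sqrt{n}$.
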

\begin{proof}
   The result follows from a martingale central limit theorem (Theorem \ref{thm:martingale-clt} in Appendix \ref{app:martingales}), because letting
   $\Delta_k = M_k - M_{k-1}$, by Lemma \ref{lemma:martingale}:
   $$
   \frac{1}{n} \sum_{k=1}^n \E[\Delta_k^2 \mid \F_{k-1}] 
   = \frac{1}{n} \sum_{k=1}^n 
   \big[P_{S_{k-1}} g_{S_{k-1}}^2 (X_{k-1}) - (P_{S_{k-1}} g_{S_{k-1}})^2(X_{k-1})\big] 
   \to \sigma^2,
   $$
   due to \ref{a:limiting-poisson}. By Theorem \ref{thm:bounded}
   $$
   |\Delta_k|\le \| g_{S_k} \|_\infty + \| P_{S_{k-1}}g_{S_{k-1}} \|_\infty 
   \le \| g_{S_k} \|_\infty + \| g_{S_{k-1}} \|_\infty \le 2 C (1-\rho)^{-1} \osc(\varphi),
   $$
   and therefore $\Delta_k^2 1(|\Delta_k| \ge \epsilon \sqrt{n})$ are identically zero for sufficiently large $n$.
\end{proof}

We present later verifiable conditions which imply \ref{a:limiting-poisson} (Lemma \ref{lem:convergence-continuity-clt} in Section \ref{sec:sa}).

\section{Waning adaptation}
\label{sec:waning}

Lemma \ref{lem:control-RM} in Section \ref{sec:uniform} showed that under \ref{a:markov} and \ref{a:simultaneous-uniform}, the terms $R_n/n$, $R_n/\sqrt{n}$ and $M_n/n$ in the decomposition \eqref{eq:main-decomposition} vanish as $n\to\infty$, and $M_n/\sqrt{n}$ converges to a Gaussian limit under \ref{a:limiting-poisson} (Lemma \ref{lem:martingale-term-clt}). 

In order to establish a law of large numbers and a central limit theorem, we only need to establish that $A_n/n\to 0$ and $A_n/\sqrt{n}\to 0$, respectively. The following general assumption will guarantee this:

\begin{assumption}[Waning adaptation]
   \label{a:waning}
   Let $D_k$ be random numbers taking values in $[0,1]$ such that 
   $$
   \sup_{x\in\X} d_\tv \big( P_{S_k}(x,\uarg), P_{S_{k-1}}(x,\uarg)\big) \le D_k.
   $$
   If the following holds for $p>0$:
   $$
      \frac{1}{n^p} \sum_{k=1}^n D_k \to 0 \qquad \text{(in probability/almost surely)},
   $$
   then the adaptation is said to be \emph{$p$-waning} (weakly/strongly, respectively).
\end{assumption}

\begin{theorem}
   \label{thm:waning-convergence}
Suppose that adaptation is weakly/strongly $p$-waning. Then,
$$
   \frac{A_n}{n^p} \xrightarrow{n\to\infty} 0,
$$
in probability/almost surely, respectively.
\end{theorem}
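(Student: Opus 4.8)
The plan is to reduce the claim to a single deterministic, pathwise estimate: that each increment of the adaptation sum is controlled by the corresponding total-variation gap $D_k$. Once a bound of the form $\|g_{S_k}-g_{S_{k-1}}\|_\infty \le L\, D_k$ is available for a finite deterministic constant $L$, the conclusion is immediate, since
$$
\frac{|A_n|}{n^p} \le \frac{1}{n^p}\sum_{k=1}^n \big|g_{S_k}(X_k) - g_{S_{k-1}}(X_k)\big| \le L\,\frac{1}{n^p}\sum_{k=1}^n D_k \xrightarrow{n\to\infty} 0
$$
in probability (resp.\ almost surely) by the $p$-waning assumption \ref{a:waning}. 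So the entire argument rests on this key estimate, which I expect to be the main obstacle.

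For the key estimate I would fix an outcome $\omega$ and two parameter values $s=S_k(\omega)$, $s'=S_{k-1}(\omega)$, and work with the deterministic functions $g_s,g_{s'}$ from Theorem \ref{thm:bounded}. The clean way to compare them is through the resolvent identity associated with the Poisson equation. Since $g_s = \bar\varphi + P_s g_s$ for every parameter, subtracting the two Poisson equations and adding and subtracting $P_s g_{s'}$ gives
$$
g_s - g_{s'} = P_s(g_s - g_{s'}) + (P_s - P_{s'})g_{s'},
\qquad\text{equivalently}\qquad
(I - P_s)(g_s - g_{s'}) = (P_s - P_{s'})g_{s'}.
$$
Both $g_s-g_{s'}$ and $w:=(P_s-P_{s'})g_{s'}$ lie in $L_0^\infty(\pi)$ (the latter because $\pi(P_s g_{s'}) = \pi(P_{s'}g_{s'}) = \pi(g_{s'})$ by $\pi$-invariance), so Lemma \ref{lemma:bounded} guarantees that the Neumann series $\sum_{i\ge 0}P_s^i w$ converges in $\|\cdot\|_\infty$ and is the unique $L_0^\infty(\pi)$ solution of $(I-P_s)u=w$; hence $g_s - g_{s'} = \sum_{i=0}^\infty P_s^i w$.

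It then remains to estimate $\|w\|_\infty$ by the transition gap and to sum the Neumann series. Using the $\osc$-form of the total variation distance, $\|w\|_\infty = \sup_x |P_s(x,\uarg)(g_{s'}) - P_{s'}(x,\uarg)(g_{s'})| \le \osc(g_{s'})\,\sup_x d_\tv(P_s(x,\uarg),P_{s'}(x,\uarg))$, while $\osc(g_{s'}) \le 2\|g_{s'}\|_\infty \le 2C(1-\rho)^{-1}\osc(\varphi)$ by Theorem \ref{thm:bounded}. Combining this with $\sum_{i\ge 0}\|P_s^i w\|_\infty \le C(1-\rho)^{-1}\osc(w) \le 2C(1-\rho)^{-1}\|w\|_\infty$ (Lemma \ref{lemma:bounded}, taking without loss of generality $C\ge 1$ so that the $i=0$ term is also covered) yields $\|g_s - g_{s'}\|_\infty \le L\, \sup_x d_\tv(P_s(x,\uarg),P_{s'}(x,\uarg))$ with $L = 4C^2(1-\rho)^{-2}\osc(\varphi)$. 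Specialising to $s=S_k$, $s'=S_{k-1}$ and invoking $\sup_x d_\tv(P_{S_k}(x,\uarg),P_{S_{k-1}}(x,\uarg)) \le D_k$ from \ref{a:waning} gives the required bound $\|g_{S_k}-g_{S_{k-1}}\|_\infty \le L\,D_k$.

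The main difficulty is entirely concentrated in this deterministic step---justifying the resolvent inversion on $L_0^\infty(\pi)$ and keeping careful track of the $\osc$-versus-$\|\cdot\|_\infty$ factors of two---whereas the passage from the pathwise bound to convergence in probability or almost surely is automatic once the random summand has been dominated by the deterministic multiple $L\,D_k$.
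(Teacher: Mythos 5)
Your proof is correct and reaches the same bound $\|g_s-g_{s'}\|_\infty\le 4C^2(1-\rho)^{-2}\osc(\varphi)\sup_x d_\tv(P_s(x,\uarg),P_{s'}(x,\uarg))$ as the paper's Lemma \ref{lemma:ineq-D}, after which the passage to the theorem is identical (and, as you say, immediate). The difference lies in how that key estimate is derived. The paper telescopes the powers directly, $P_s^n\bar\varphi-P_{s'}^n\bar\varphi=\sum_{i=0}^{n-1}P_s^{i}(P_s-P_{s'})P_{s'}^{n-i-1}\bar\varphi$, bounds each summand via operator norms on $L_0^\infty(\pi)$ to get $4C^2 n\rho^{n-1}\osc(\varphi)D$, and then sums over $n$ using $\sum_n (n+1)\rho^n=(1-\rho)^{-2}$. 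You instead subtract the two Poisson equations to obtain the resolvent identity $(I-P_s)(g_s-g_{s'})=(P_s-P_{s'})g_{s'}$ and invert $(I-P_s)$ on $L_0^\infty(\pi)$ by a single Neumann series. Your route is arguably cleaner: it replaces the double sum and the operator-norm bookkeeping for $P_s-P_{s'}$ by one application of the perturbation to the single function $g_{s'}$, estimated via the $\osc$-form of total variation. The one ingredient you assert without proof is the uniqueness of the $L_0^\infty(\pi)$ solution of $(I-P_s)u=w$, which is needed to identify $g_s-g_{s'}$ with the Neumann series; this does follow in one line from Lemma \ref{lemma:bounded} (a difference $v$ of two solutions satisfies $v=P_s^k v$ for all $k$, so $\|v\|_\infty\le 2C\rho^k\|v\|_\infty\to 0$), or can be bypassed entirely by iterating $u=P_s u+w$ finitely many times and letting $\|P_s^n u\|_\infty\to 0$. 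With that line added, your argument is a complete and slightly more economical alternative to the paper's proof, yielding the identical constant.
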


The proof of Theorem \ref{thm:waning-convergence} is preceded by the following key technical result, which establishes an upper bound for the individual terms in the sum $A_n$.

\begin{lemma}\label{lemma:ineq-D}
   Under \ref{a:simultaneous-uniform} and with $\varphi\in L^\infty$, the functions $g_s$ defined in Theorem \ref{thm:bounded} satisfy 
   $$
      \| g_{s} - g_{s'} \|_\infty \le \frac{4 C^2}{(1-\rho)^2} \osc(\varphi) \sup_{x\in\X} d_\tv\big( P_s(x,\uarg), P_{s'}(x,\uarg) \big)
      \qquad\text{for all }s,s'\in\Sp.
   $$
\end{lemma}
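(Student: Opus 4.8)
The plan is to exploit the Neumann-series representation \eqref{eq:poisson-neumann} from Theorem \ref{thm:bounded}. Since both $g_s=\sum_{k\ge 0}P_s^k\bar\varphi$ and $g_{s'}=\sum_{k\ge 0}P_{s'}^k\bar\varphi$ converge in $(L^\infty,\|\uarg\|_\infty)$ by that theorem, I would first write $g_s-g_{s'}=\sum_{k\ge 1}(P_s^k-P_{s'}^k)\bar\varphi$, the $k=0$ terms cancelling. The key algebraic step is the (non-commutative) telescoping identity $P_s^k-P_{s'}^k=\sum_{i=0}^{k-1}P_s^i(P_s-P_{s'})P_{s'}^{\,k-1-i}$; after reindexing with $m=k-1-i$ this turns the single sum into the double sum $g_s-g_{s'}=\sum_{i\ge 0}\sum_{m\ge 0}P_s^i(P_s-P_{s'})P_{s'}^m\bar\varphi$, over all pairs $(i,m)$ of nonnegative integers.

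The heart of the argument is a single termwise estimate. Writing $\epsilon:=\sup_{x\in\X}d_\tv\big(P_s(x,\uarg),P_{s'}(x,\uarg)\big)$ and $h_m:=P_{s'}^m\bar\varphi$, Lemma \ref{lemma:bounded} gives $\osc(h_m)\le 2\|h_m\|_\infty\le 2C\rho^m\osc(\varphi)$, using $\osc(\bar\varphi)=\osc(\varphi)$. The crucial observation is that applying $P_s-P_{s'}$ to a function is controlled by total variation through the oscillation of that function: for every $x$, the oscillation characterisation of $d_\tv$ yields $|(P_s-P_{s'})h_m(x)|\le \osc(h_m)\,d_\tv\big(P_s(x,\uarg),P_{s'}(x,\uarg)\big)\le\osc(h_m)\,\epsilon$, so that $u_m:=(P_s-P_{s'})h_m$ satisfies $\|u_m\|_\infty\le 2C\rho^m\osc(\varphi)\,\epsilon$. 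Because both $P_s$ and $P_{s'}$ are $\pi$-invariant, $u_m\in L_0^\infty(\pi)$, and a second application of Lemma \ref{lemma:bounded} gives $\|P_s^i u_m\|_\infty\le C\rho^i\osc(u_m)\le 2C\rho^i\|u_m\|_\infty\le 4C^2\rho^{\,i+m}\osc(\varphi)\,\epsilon$.

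It then remains to sum: the double series is dominated by $4C^2\osc(\varphi)\,\epsilon\sum_{i\ge 0}\sum_{m\ge 0}\rho^{\,i+m}=\tfrac{4C^2}{(1-\rho)^2}\osc(\varphi)\,\epsilon$, which is exactly the claimed bound. This absolute summability in the Banach space $(L^\infty,\|\uarg\|_\infty)$ is also what legitimises the rearrangement into the double sum, so I would prove the termwise bound first and only then commit to the reindexing and interchange of summation order. I expect the main obstacle to be bookkeeping rather than conceptual: carefully justifying the telescoping-plus-reindexing identity and the order of summation (secured by the absolute bound), plus the minor point that the second use of Lemma \ref{lemma:bounded} includes $i=0$, where one relies on $C\ge 1$ (which may be taken without loss of generality, being forced by \ref{a:simultaneous-uniform} at $k=0$ for non-degenerate $\pi$). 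The conceptual core is the lone estimate linking $(P_s-P_{s'})h$ to $d_\tv$ via $\osc(h)$.
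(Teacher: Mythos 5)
Your proposal is correct and takes essentially the same route as the paper's proof: the same non-commutative telescoping identity $P_s^k-P_{s'}^k=\sum_{i=0}^{k-1}P_s^i(P_s-P_{s'})P_{s'}^{k-1-i}$, and the same three-factor estimate $2C\rho^i\cdot 2\epsilon\cdot C\rho^m\osc(\varphi)$, where the paper packages the middle factor as the operator-norm bound $\|P_s-P_{s'}\|_{L_0^\infty(\pi)}\le 2\epsilon$ and you obtain it pointwise via the $\sup_{\osc(f)\le 1}$ characterisation of $d_\tv$. The only difference is bookkeeping: you sum the double geometric series $\sum_{i,m}\rho^{i+m}$ after reindexing, whereas the paper keeps the outer index $n$ and sums $\sum_n(n+1)\rho^n$, which gives the identical constant $(1-\rho)^{-2}$.
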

\begin{proof}
Let $s,s'\in\Sp$ and $n\ge 1$, and write the telescopic sum
\begin{align*}
P_s^n\bar{\varphi}(x)-P_{s'}^n\bar{\varphi}(x)
&=\sum_{i=0}^{n-1} \big(P_s^{i+1}P_{s'}^{n-(i+1)}\bar{\varphi}(x)-P_s^{i}P_{s'}^{n-i}\bar{\varphi}(x)\big)  \\
&=\sum_{i=0}^{n-1} P_s^{i}(P_s-P_{s'})P_{s'}^{n-i-1}\bar{\varphi}(x).
\end{align*}
For an operator $G: (L_0^\infty(\pi),\|\cdot \|_\infty)\to (L_0^\infty(\pi),\|\cdot \|_\infty)$, we use the operator norm:
$$\|G\|_{L_0^{\infty}(\pi)}=\sup_{h\in L_{0}^\infty(\pi),\, \|h\|_\infty \le 1}\|Gh\|_\infty.$$
We have by Lemma \ref{lemma:bounded}, that $(P_s-P_{s'})$ and $P_s^k$ for all $k\in \N$ are bounded operators. 
Therefore, by the sub-multiplicativity of the operator norm
\begin{align*}
\|P_s^n\bar{\varphi}-P_{s'}^n\bar{\varphi}\|_\infty 
&\le \sum_{i=0}^{n-1} \|P_s^{i}\|_{L_0^\infty(\pi)} \|P_s-P_{s'}\|_{L_0^\infty(\pi)}  \|P_{s'}^{n-i-1}\bar{\varphi}\|_\infty.
\end{align*}
The last term can be controlled by Lemma \ref{lemma:bounded}, which yields
\begin{align*}
\|P_{s'}^{n-i-1}\bar{\varphi}\|_\infty \le C \rho^{n-i-1} \osc(\varphi),
\end{align*}
and, similarly, the first term is upper bounded by
\begin{align*}
\|P_s^{i}\|_{L_0^\infty(\pi)}&=\sup_{h\in L_0^\infty(\pi),\,  \|h\|_\infty \le 1}\|P_s^{i}h\|_\infty 
\le 2 C \rho^i.
\end{align*}
For the remaining term, we may write 
\begin{align*}
\|P_s-P_{s'}\|_{L_0^\infty(\pi)}&=\sup_{h\in L_0^\infty(\pi),\,\|h\|_\infty \le 1}\|(P_s-P_{s'})h\|_\infty \\
&\le \sup_{\|h\|_\infty \le 1}\|(P_s-P_{s'})h\|_\infty \\
&=\sup_{x\in \X}  \sup_{\|h\|_\infty \le 1}|P_sh(x)-P_{s'}h(x)| \\
&\le 2\sup_{x\in \X} d_\tv\big(P_s(x,\uarg),P_{s'}(x,\uarg)\big).
\end{align*}
Let $D = \sup_{x\in\X} d_\tv\big(P_s(x,\uarg),P_{s'}(x,\uarg)\big)$, then combining these bounds yields
\begin{align*}
\|P_{s}^n\bar{\varphi}-P_{s'}^n\bar{\varphi}\|_\infty
& \le \sum_{i=0}^{n-1} 4C^2\rho^{n-1} \osc(\varphi) D
 = 4C^2n\rho^{n-1} \osc(\varphi) D,
\end{align*}
and consequently,
\begin{align}
\|g_{s}-g_{s'}\|_\infty
&\le \sum_{n=0}^\infty \|P_{s}^{n+1}\bar{\varphi}-P_{s'}^{n+1}\bar{\varphi}\|_\infty \label{eq:poisson-trinagle} \\
&\le 4 C^2 \osc(\varphi) D \sum_{n=0}^\infty (n+1) \rho^{n}  \nonumber\\
&=\frac{4C^2}{(1-\rho)^2}\osc(\varphi) D. \nonumber \qedhere
\end{align}
\end{proof}

\begin{proof}[Proof of Theorem \ref{thm:waning-convergence}]
Thanks to Lemma \ref{lemma:ineq-D}, we have the upper bound:
\begin{equation*}
   \bigg| \frac{A_n}{n^p} \bigg|
   \le \frac{1}{n^p} \sum_{k=1}^n \| g_{S_k} - g_{S_{k-1}} \|_\infty
   \le  \frac{4C^2}{(1-\rho)^2}\osc(\varphi) \bigg( \frac{1}{n^p}\sum_{k=1}^n  D_k \bigg). \qedhere
\end{equation*}
\end{proof}

We are ready to state our main abstract convergence theorem.

\begin{theorem}
Suppose that \ref{a:markov} and \ref{a:simultaneous-uniform} hold and $\varphi\in L^\infty$, and adaptation is $p$-waning strongly/weakly \ref{a:waning}.
\begin{enumerate}[(i)]
   \item \label{item:wlln} If $p=1$, then the weak law of large numbers holds, that is, \eqref{eq:lln} in probability.
   \item \label{item:slln} If $p=1$ strongly, then then the strong law of large numbers holds, that is, \eqref{eq:lln} a.s.
   \item \label{item:clt} If $p=1/2$ and and \ref{a:limiting-poisson} holds with $\sigma_\varphi^2$, then the central limit theorem \eqref{eq:clt} holds with $\sigma_\varphi^2$.
\end{enumerate}
\end{theorem}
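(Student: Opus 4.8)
The plan is to assemble the decomposition \eqref{eq:main-decomposition} from the results already established, taking care to match the relevant modes of convergence. First, since \ref{a:simultaneous-uniform} holds and $\varphi\in L^\infty$, Theorem \ref{thm:bounded} guarantees that \ref{a:poisson} holds with the uniformly bounded functions $g_s$ given by \eqref{eq:poisson-neumann}. This legitimises the decomposition
$$
\frac{1}{n}\sum_{k=1}^n \big[\varphi(X_k)-\pi(\varphi)\big] = \frac{M_n}{n} + \frac{A_n}{n} + \frac{R_n}{n},
$$
and I would treat its three terms separately.

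For the cases $p=1$, that is parts (i) and (ii), Lemma \ref{lem:control-RM} already gives $R_n/n\to 0$ and $M_n/n\to 0$ almost surely under \ref{a:markov} and \ref{a:simultaneous-uniform} alone. The only remaining term is $A_n/n$, controlled by Theorem \ref{thm:waning-convergence}: under weak (resp.\ strong) $1$-waning it vanishes in probability (resp.\ almost surely). Summing the three contributions yields \eqref{eq:lln} in probability for (i), since almost sure convergence implies convergence in probability, and almost surely for (ii), since a finite sum of almost surely convergent sequences converges almost surely.

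For the central limit theorem, part (iii) with $p=1/2$, I would instead divide by $\sqrt{n}$ as in \eqref{eq:clt}. Lemma \ref{lem:control-RM} provides $R_n/\sqrt{n}\to 0$ almost surely, and Theorem \ref{thm:waning-convergence} with $p=1/2$ gives $A_n/\sqrt{n}\to 0$ in probability, for which weak $1/2$-waning already suffices. Meanwhile \ref{a:limiting-poisson} together with Lemma \ref{lem:martingale-term-clt} yields $M_n/\sqrt{n}\to N(0,\sigma_\varphi^2)$ in distribution. The final step is to combine these: since $A_n/\sqrt{n}+R_n/\sqrt{n}\to 0$ in probability while $M_n/\sqrt{n}$ converges in distribution, Slutsky's theorem delivers convergence in distribution of the full sum to $N(0,\sigma_\varphi^2)$.

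I do not expect a genuine obstacle, as the proof is essentially bookkeeping; the only point requiring care is the matching of convergence modes. In particular, for the central limit theorem it is worth noting that merely \emph{weak} $1/2$-waning is needed, because the vanishing of $A_n/\sqrt{n}$ and $R_n/\sqrt{n}$ enters only through Slutsky's theorem, which accepts convergence in probability to zero and need not be strengthened to almost sure convergence.
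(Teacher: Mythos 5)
Your proposal is correct and follows essentially the same route as the paper: invoke Theorem \ref{thm:bounded} to justify the decomposition, apply Lemma \ref{lem:control-RM} for $R_n$ and $M_n$, Theorem \ref{thm:waning-convergence} for $A_n$, and Lemma \ref{lem:martingale-term-clt} with Slutsky's lemma for the central limit theorem. Your observation that weak $1/2$-waning suffices for part (iii) is consistent with the paper's argument, which likewise only uses convergence in probability of $A_n/\sqrt{n}$.
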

\begin{proof}
   For \eqref{item:wlln} and \eqref{item:slln}, consider the decomposition \ref{eq:main-decomposition}, where the terms $M_n/n\to 0$ and $R_n/n\to 0$ a.s.~by Lemma \ref{lem:control-RM}, and $A_n/n\to 0$ in probability/a.s.~due to Theorem \ref{thm:waning-convergence}.

   For the central limit theorem \eqref{item:clt}, we note similarly that $R_n/\sqrt{n}\to 0$ a.s.~by Lemma \ref{lem:control-RM}, $A_n/\sqrt{n}\to 0$ in probability by Theorem \ref{thm:waning-convergence}, and $M_n/\sqrt{n}\to N(0,\sigma_\varphi^2)$ in distribution. The result follows by Slutsky's lemma.
\end{proof}

We then turn into some sufficient conditions which ensure $p$-waning adaptation. The first such condition, \emph{diminishing adaptation} was originally introduced in the coupling context \citep{roberts-rosenthal}. It implies weak 1-waning.

\begin{assumption}[Diminishing adaptation]
   \label{a:diminishing}
   For $D_k$ as in Assumption \ref{a:waning}, 
   $$
      D_k \to 0, \qquad \text{in probability.}
   $$
\end{assumption}

\begin{lemma}
   \label{lem:diminishing-implies-waning}
   If \ref{a:diminishing} holds, then adaptation is weakly 1-waning \ref{a:waning}.
\end{lemma}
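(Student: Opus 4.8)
The plan is to reduce the claim to the elementary fact that Cesàro averages of bounded, nonnegative random variables that converge to zero in probability also converge to zero in probability, which I would route through expectations. Recall that weak $1$-waning \ref{a:waning} is exactly the statement $\frac{1}{n}\sum_{k=1}^n D_k \to 0$ in probability, and that \ref{a:diminishing} supplies $D_k \to 0$ in probability together with the standing bound $D_k \in [0,1]$.

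First I would upgrade the convergence in probability of the bounded variables $D_k$ to convergence of their means. Fixing $\epsilon > 0$ and using $D_k \in [0,1]$, I would split
\[
\E[D_k] = \E\big[D_k \mathbf{1}(D_k \le \epsilon)\big] + \E\big[D_k \mathbf{1}(D_k > \epsilon)\big] \le \epsilon + \P(D_k > \epsilon).
\]
Diminishing adaptation gives $\P(D_k > \epsilon) \to 0$ as $k \to \infty$, so $\limsup_{k\to\infty}\E[D_k] \le \epsilon$; since $\epsilon > 0$ is arbitrary, $\E[D_k] \to 0$. This is the one place where the uniform bound $D_k \le 1$ built into \ref{a:waning} is genuinely needed—without it, convergence in probability need not force convergence of the means.

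Next comes a purely deterministic Cesàro step: since the real sequence $\E[D_k]$ tends to $0$, so do its averages $\frac{1}{n}\sum_{k=1}^n \E[D_k] \to 0$. Finally I would convert back to convergence in probability via Markov's inequality. Writing $Y_n = \frac{1}{n}\sum_{k=1}^n D_k \ge 0$, we have $\E[Y_n] = \frac{1}{n}\sum_{k=1}^n \E[D_k] \to 0$, and hence for every $\delta > 0$,
\[
\P(Y_n \ge \delta) \le \frac{\E[Y_n]}{\delta} \xrightarrow{n\to\infty} 0,
\]
which is precisely $\frac{1}{n}\sum_{k=1}^n D_k \to 0$ in probability, i.e.\ weak $1$-waning.

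The argument is short, and the only genuinely nontrivial point is the first step, trading convergence in probability for convergence of expectations; everything afterwards is Cesàro summation and a one-line Markov estimate. Accordingly, I expect no real obstacle beyond making sure the boundedness hypothesis $D_k \le 1$ is explicitly invoked where it is used.
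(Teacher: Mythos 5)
Your proof is correct and follows essentially the same route as the paper's: both pass from convergence in probability of the bounded $D_k$ to convergence of the means $\E[D_k]\to 0$, apply the Cesàro averaging step (Lemma \ref{lem:convergent-serie-for-convergent-sequence}), and conclude. You in fact spell out two steps the paper leaves implicit—the $\epsilon$-splitting argument behind ``clearly equivalent'' and the final Markov inequality converting $\E[Y_n]\to 0$ back into convergence in probability—so no changes are needed.
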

\begin{proof}
   Because $D_k\in[0,1]$, \ref{a:diminishing} is clearly equivalent to $\E[D_k] \to 0$. Therefore,
   $$\lim_{n\to\infty} \frac{1}{n} \sum_{k=1}^n \E[D_k] \to 0;
   $$ 
   see Lemma \ref{lem:convergent-serie-for-convergent-sequence} in Appendix \ref{app:calculus}.
\end{proof}

As a consequence, Theorem \ref{thm:waning-convergence} ensures that diminishing adaptation implies the weak law of large numbers (for bounded $\varphi$ when \ref{a:markov} and \ref{a:simultaneous-uniform} hold), which is similar to the original result of \cite{roberts-rosenthal}. If, however, $D_k\to 0$ at a certain rate, strong $p$-waning holds, and therefore strong law of large numbers and/or a central limit theorem can hold.

\begin{assumption}[with rate $p>0$]
   \label{a:diminishing-with-rate}
   For $D_k$ as in Assumption \ref{a:waning}, the following holds:
   $$
      \sum_{k=1}^\infty \frac{\E[D_k]}{k^p} <\infty.
   $$
\end{assumption}
\begin{lemma}
   \label{lem:diminishing-rate-implies-strong-waning}
   If \ref{a:diminishing-with-rate} holds for some $p>0$, then the adaptation is strongly $p$-waning \ref{a:waning}.
\end{lemma}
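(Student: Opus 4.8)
The plan is to deduce the result from Kronecker's lemma, exactly as in the proof of Lemma \ref{lem:control-RM}, after first showing that the weighted series $\sum_{k\ge 1} D_k/k^p$ has an almost surely finite limit. Since $p>0$, the sequence $b_n := n^p$ is positive, strictly increasing and satisfies $b_n\to\infty$, so Kronecker's lemma applies in the form: whenever $\sum_{k=1}^\infty D_k/k^p$ converges to a finite value, we have $\frac{1}{n^p}\sum_{k=1}^n D_k\to 0$. Thus it suffices to prove that $\sum_{k=1}^\infty D_k/k^p<\infty$ almost surely.

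To establish this, I would exploit that the summands are nonnegative. Because each $D_k$ takes values in $[0,1]$, the partial sums $W_n := \sum_{k=1}^n D_k/k^p$ form a nondecreasing sequence in $n$, and hence converge pathwise to a limit $W_\infty := \sum_{k=1}^\infty D_k/k^p$ taking values in $[0,\infty]$. By Tonelli's theorem (equivalently, the monotone convergence theorem applied to the partial sums), the expectation may be exchanged with the summation, giving
$$
\E[W_\infty] = \sum_{k=1}^\infty \frac{\E[D_k]}{k^p},
$$
which is finite by Assumption \ref{a:diminishing-with-rate}. A nonnegative random variable with finite expectation is finite almost surely, so $W_\infty<\infty$ a.s.

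Finally, on the almost sure event $\{W_\infty<\infty\}$ the hypothesis of Kronecker's lemma holds pathwise, and we conclude $\frac{1}{n^p}\sum_{k=1}^n D_k\to 0$ almost surely, which is precisely strong $p$-waning in the sense of \ref{a:waning}. I do not anticipate a genuine obstacle here: unlike the martingale term $M_n$, the quantities $D_k$ are nonnegative, so no martingale convergence or variance bound is needed---monotonicity alone secures convergence of the weighted series, and the only steps of substance are the routine reduction to Kronecker's lemma and the interchange of expectation and summation justified by nonnegativity.
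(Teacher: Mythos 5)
Your proof is correct and follows essentially the same route as the paper: both use monotonicity of the partial sums together with the monotone convergence (Tonelli) theorem to show that $\sum_{k\ge 1} D_k/k^p$ has finite expectation and is therefore almost surely finite, and then conclude by a pathwise application of Kronecker's lemma. No gaps.
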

\begin{proof}
   Let $Z_n = \sum_{k=1}^n D_k/k^p$ and $Z_\infty = \lim_{n\to\infty} Z_n$, where the limit exists in $[0,\infty]$ because $D_k\ge 0$. By monotone convergence, $\E[Z_\infty] = \lim_{n\to\infty} \E[Z_n] < \infty$, so $Z_\infty<\infty$ a.s. We may apply Kronecker's lemma yielding $n^{-p} \sum_{k=1}^n D_k \to 0$ a.s.
\end{proof}

\section{Stochastic approximation type adaptation}
\label{sec:sa}

A natural way to implement adaptations is via stochastic approximation \citep{robbins-monro} or stochastic gradient type optimisation. This type of optimisation was used with Markovian noise earlier \citep[cf.][]{benveniste-metivier-priouret}, the link to adaptive MCMC was established in \citep{andrieu-robert}, and the related theory developed in \citep{andrieu-moulines}; see also the introduction with a number of illustrative examples in \citep{andrieu-thoms}.

Here, in addition to \ref{a:markov}, we assume that $\Sp$ is (subset of) some vector space and the update of $S_k$ has the following recursive form:
\begin{equation}
   S_k = S_{k-1} + \gamma_k H_k,
   \label{eq:sa-gen}
\end{equation}
where $H_k$ is $\F_k$-measurable and $(\gamma_k)_{k\ge 1}$ is a positive step size sequence. 

The rationale of adaptation of the form \eqref{eq:sa-gen} comes from a scenario where the expectation of $H_n$ `under stationarity' is (similar to) the negative gradient of some loss function $L$, which we aim to minimise. In principle, our form of \eqref{eq:sa-gen} can accomodate also $H_n$ which incorporate some sort of averaging/regularisation/speedup for the `stochastic gradients' \citep[e.g.][]{duchi-hazan-singer,kingma-ba}.

\begin{example}[Adaptive Metropolis]
   \label{ex:am}
A recursive form of the adaptive Metropolis algorithm \cite{saksman-am,andrieu-moulines} has the form \eqref{eq:sa-gen}, with
$S_k = (\mu_k, \Sigma_k)$ where $\mu_k$ is the estimated mean vector and $\Sigma_k$ is the covariance matrix, $\gamma_k = k^{-1}$, 
$$
   H_k = \begin{bmatrix} 
      X_k - \mu_{k-1} \\
      X_k X_k^T - \Sigma_{k-1}
   \end{bmatrix},
$$
and $P_{(\mu,\Sigma)} = P_\Sigma$ stand for the Gaussian random-walk Metropolis kernel with proposal covariance $\Sigma$.
\end{example}

\begin{example}[Directional acceptance rate adaptation]
   \label{ex:ram}
In the `robust adaptive Metropolis' algorithm \cite{vihola-ram}, $P_s$ stands also for a random-walk Metropolis kernel with proposal covariance $s$, where proposals are formed as
$$
  Y_k = X_{k-1} + s Z_k, \qquad Z_k \sim N(0,I),
$$
and accepted/rejected with the usual Metropolis acceptence rate $\alpha_k = \min\{1, \frac{\pi(Y_k)}{\pi(X_{k-1})}\}$.
The update uses the random variables $Z_k$:
$$
   H_k = (\alpha_k - \alpha_*)S_{k-1} \frac{Z_k Z_k^T}{\| Z_k \|^2} S_{k-1}^T,
$$
where the desired acceptance rate is typically set to $\alpha_* = 0.234$, and $\gamma_k = n^{-2/3}$ (say).
\end{example}

The usual assumption made in this context is the continuity of the mapping $s\mapsto P_s$, in some suitable sense. We exemplify this with the following assumption, from \cite{andrieu-moulines}:

\begin{assumption}
   \label{a:lipschitz}
There exists a constant $L<\infty$ such that for all $s,s'\in\Sp$:
$$
   \sup_{x} d_\tv\big( P_s(x,\uarg), P_{s'}(x,\uarg)\big) \le L \| s - s' \|.
$$
\end{assumption}
Assumption \ref{a:lipschitz} means that the mappings $s\mapsto P_s(x,\uarg)$ from $\R^d$ to the space of probability measures equipped with the total variation metric, are Lipschitz uniform in $x$.

The following assumption ensures that random-walk Metropolis transitions satisfy \ref{a:lipschitz}.

\begin{assumption}
   \label{a:metropolis}
Suppose $\X\subset\R^d$ is compact, $\pi$ bounded, and $P_s$ is the Gaussian random-walk Metropolis targetting $\pi$ with covariance matrix $s$. Let $0<a<b<\infty$ be constants and let $\Sp\subset\R^{d\times d}$ stand for the symmetric positive definite matrices with all eigenvalues within $[a,b]$.
\end{assumption}

\begin{lemma}
   \label{lem:bounded-rwm-adapt}
   Assumption \ref{a:metropolis} implies \ref{a:simultaneous-uniform} and \ref{a:lipschitz} for  $\{P_s\}_{s\in\Sp}$.
\end{lemma}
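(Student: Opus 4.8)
The plan is to verify the two assumptions separately, both reducing to elementary properties of the Gaussian proposal on the compact set $\X$. Throughout I would write $q_s(x,y)$ for the $N(x,s)$ proposal density and $\alpha(x,y) = \min\{1, \pi(y)/\pi(x)\}$ for the acceptance probability, noting that $\alpha$ is \emph{independent of $s$} because the Gaussian proposal is symmetric. This gives the decomposition $P_s(x,\ud y) = \alpha(x,y) q_s(x,y)\,\ud y + r_s(x)\delta_x(\ud y)$, where $r_s(x) = 1 - \int \alpha(x,z) q_s(x,z)\,\ud z$ is the rejection probability.

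For \ref{a:simultaneous-uniform} I would establish a uniform Doeblin minorisation. Since $\X$ is compact, $|y-x|$ is bounded on $\X\times\X$, and every $s\in\Sp$ satisfies $aI \preceq s \preceq bI$, so $(y-x)^\top s^{-1}(y-x)\le a^{-1}\mathrm{diam}(\X)^2$ and $(\det s)^{1/2}\le b^{d/2}$; this yields a strictly positive lower bound $q_s(x,y)\ge q_{\min}>0$ for all $x,y\in\X$ and $s\in\Sp$, depending only on $a,b,d,\mathrm{diam}(\X)$. Since $\pi$ is bounded, writing $\pi_{\max}=\sup_x\pi(x)$ and using $\pi(x)\le\pi_{\max}$ gives $\alpha(x,y)\ge \pi(y)/\pi_{\max}$, hence $P_s(x,A)\ge q_{\min}\int_A\alpha(x,y)\,\ud y\ge (q_{\min}/\pi_{\max})\pi(A)$ for all $A$, $x$ and $s$. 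This one-step minorisation $P_s(x,A)\ge \epsilon\,\pi(A)$ with $\epsilon=q_{\min}/\pi_{\max}>0$ uniform in $s$ gives, by the standard coupling (Doeblin) argument, $d_\tv(P_s^k(x,\uarg),\pi)\le (1-\epsilon)^k$, i.e.\ \ref{a:simultaneous-uniform} with $C=1$ and $\rho=1-\epsilon$.

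For \ref{a:lipschitz} I would first reduce the kernel total variation distance to a proposal one. Because $\alpha$ does not depend on $s$, the signed measure $P_s(x,\uarg)-P_{s'}(x,\uarg)$ splits into an absolutely continuous part with density $\alpha(x,y)(q_s(x,y)-q_{s'}(x,y))$ and an atom of mass $r_s(x)-r_{s'}(x)$ at $x$; these are mutually singular, so their total variations add. Using $\alpha\le 1$ and $|r_s(x)-r_{s'}(x)|\le\int|q_s(x,z)-q_{s'}(x,z)|\,\ud z$ gives $d_\tv(P_s(x,\uarg),P_{s'}(x,\uarg))\le \int|q_s(x,y)-q_{s'}(x,y)|\,\ud y = 2\,d_\tv\big(N(x,s),N(x,s')\big)$. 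By translation invariance the last quantity equals $2\,d_\tv(N(0,s),N(0,s'))$, which is independent of $x$, so it remains only to show that $s\mapsto N(0,s)$ is Lipschitz in total variation on $\Sp$.

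The main work is this final Gaussian estimate, and it is where the eigenvalue bounds $a,b$ are essential. The set $\Sp=\{s:aI\preceq s\preceq bI\}$ is convex, so the segment joining any $s',s\in\Sp$ stays in $\Sp$, and I would apply the mean value inequality to $s\mapsto\phi_s(y)$, the $N(0,s)$ density. Differentiating in the matrix entries via $\partial_{s_{ij}}s^{-1}=-s^{-1}E_{ij}s^{-1}$ (with matrix units $E_{ij}$) gives $\partial_{s_{ij}}\phi_s(y)=\phi_s(y)\big[-\tfrac12(s^{-1})_{ij}+\tfrac12 y^\top s^{-1}E_{ij}s^{-1}y\big]$, whose $L^1(\ud y)$ norm is controlled by $\|s^{-1}\|\le a^{-1}$ and the second moment $\int\phi_s(y)|y|^2\,\ud y=\mathrm{tr}(s)\le db$, both bounded uniformly over $s\in\Sp$. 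Integrating in $y$ and then along the segment yields a finite constant $K=\sup_{u\in\Sp}\int\|\nabla_s\phi_u(y)\|\,\ud y$ with $\int|\phi_s(y)-\phi_{s'}(y)|\,\ud y\le K\|s-s'\|$, hence $d_\tv(N(0,s),N(0,s'))\le \tfrac12 K\|s-s'\|$ and finally $\sup_x d_\tv(P_s(x,\uarg),P_{s'}(x,\uarg))\le K\|s-s'\|$, giving \ref{a:lipschitz} with $L=K$. The only delicate point is the uniform integrability of these derivatives, which is exactly what the compactness of $\Sp$ secures.
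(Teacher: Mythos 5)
Your proposal is correct. Note that the paper itself gives no proof of this lemma, only the pointer to \citep[proof of Theorem~1]{saksman-am}; your argument is essentially the standard one found there --- a one-step Doeblin minorisation $P_s(x,A)\ge (q_{\min}/\pi_{\max})\,\pi(A)$ uniform over $s\in\Sp$ (using compactness of $\X$ and the eigenvalue bounds to lower-bound the proposal density) for \aref{a:simultaneous-uniform}, and for \aref{a:lipschitz} the reduction of $d_\tv(P_s(x,\uarg),P_{s'}(x,\uarg))$ to the $L^1$ distance of the Gaussian proposal densities (valid because the acceptance ratio is $s$-free) followed by a mean-value bound over the convex, compact set $\Sp$ --- so it correctly fills in the details the paper delegates to the reference.
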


For a proof of Lemma \ref{lem:bounded-rwm-adapt}, see for instance \citep[proof of Theorem 1]{saksman-am}.

Assumption \ref{a:metropolis} requires that the feasible covariances $\Sp$ have eigenvalues constrained within $[a,b]$, and both Examples \ref{ex:am} and \ref{ex:ram} can lead, in principle, to either arbitrarily small and/or large eigenvalues for $S_k$. Therefore, we must \emph{constrain} the eigenvalues within some bounds.

\begin{remark}
   \label{rem:stabilisation}
   It may be necessary to add a stabilisation mechanism to the adaptation dynamic \eqref{eq:sa-gen} which ensures that the parameter $S_k$ does not escape the set of feasible values $\Sp$.  A simple way to implement this involves replacing $H_k$ in \eqref{eq:sa-gen} by a modified $\tilde{H}_k$; for instance, we may add an accept/reject type rule:
   \begin{equation}
   \tilde{H}_k = \begin{cases} H_k, & S_{k-1} + \gamma_k H_k \in \Sp, \\
      0, & S_{k-1} + \gamma_k H_k \notin \Sp.
   \end{cases}
   \label{eq:constrained-sa}
   \end{equation}
   This ensures that $S_k$ remains in $\Sp$ and also $\| S_k - S_{k-1} \| \le \gamma \| H_k \|$, which we need below. Alternatively, $S_{k-1} + \gamma_k H_k$ can be projected to $\Sp$ in some way.
\end{remark}

\begin{lemma}
   \label{lem:sa-diminishing}
   Assume that the adaptation has the form \eqref{eq:sa-gen} and \ref{a:simultaneous-uniform} and \ref{a:lipschitz} hold.
   \begin{enumerate}[(i)]
      \item \label{item:sa-diminishing} If $\gamma_k \E\big[\| H_k \|\big] \to 0$, then adaptation is weakly 1-waning \ref{a:waning}.
      \item \label{item:sa-with-rate} If $\sum_{k=1}^\infty \frac{\gamma_k}{k^p} \E\big[\| H_k \|\big] < \infty$ for $p>0$, then adaptation is strongly $p$-waning \ref{a:waning}.
   \end{enumerate}
\end{lemma}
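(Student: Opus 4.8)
The plan is to reduce both parts to the two reduction lemmas already proved---Lemma \ref{lem:diminishing-implies-waning} for part \eqref{item:sa-diminishing} and Lemma \ref{lem:diminishing-rate-implies-strong-waning} for part \eqref{item:sa-with-rate}---once we have exhibited a concrete dominating sequence $D_k$ as required by \ref{a:waning}. The whole argument hinges on a single observation: the recursive form \eqref{eq:sa-gen} gives $S_k - S_{k-1} = \gamma_k H_k$, so the increment in the parameter is controlled explicitly, and \ref{a:lipschitz} converts this into control of the total variation between successive kernels.

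First I would combine \ref{a:lipschitz} with \eqref{eq:sa-gen} to write
$$
\sup_{x\in\X} d_\tv\big(P_{S_k}(x,\uarg), P_{S_{k-1}}(x,\uarg)\big) \le L\|S_k - S_{k-1}\| = L\gamma_k\|H_k\|.
$$
Since a total variation distance never exceeds $1$, I would then set $D_k = \min\{1, L\gamma_k\|H_k\|\}$, which is $\F_k$-measurable (as $H_k$ is), takes values in $[0,1]$, and dominates the supremum above---so it is a legitimate choice in \ref{a:waning}. The only inequality needed downstream is $\E[D_k] \le L\gamma_k\,\E[\|H_k\|]$, which follows from $\min\{1,x\}\le x$.

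For part \eqref{item:sa-diminishing}, the hypothesis $\gamma_k\E[\|H_k\|]\to 0$ forces $\E[D_k]\to 0$; since $D_k\ge 0$, this gives $D_k\to 0$ in probability, i.e.\ diminishing adaptation \ref{a:diminishing}, and Lemma \ref{lem:diminishing-implies-waning} yields weak $1$-waning. For part \eqref{item:sa-with-rate}, the summability hypothesis gives
$$
\sum_{k=1}^\infty \frac{\E[D_k]}{k^p} \le L\sum_{k=1}^\infty \frac{\gamma_k\E[\|H_k\|]}{k^p} < \infty,
$$
which is exactly \ref{a:diminishing-with-rate} with rate $p$, so Lemma \ref{lem:diminishing-rate-implies-strong-waning} gives strong $p$-waning.

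I do not expect a genuine obstacle here: the content is entirely in the one-line Lipschitz bound on the parameter increment, and the two hypotheses have been tailored precisely so that the resulting bound on $\E[D_k]$ matches the inputs of the earlier reduction lemmas. The only points requiring a moment's care are the truncation ensuring $D_k\in[0,1]$ and the measurability of $D_k$, both of which are routine. If one instead uses the stabilised dynamics of Remark \ref{rem:stabilisation}, the identity $S_k - S_{k-1} = \gamma_k H_k$ becomes the inequality $\|S_k - S_{k-1}\|\le \gamma_k\|H_k\|$, and the same argument goes through unchanged.
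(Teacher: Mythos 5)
Your proof is correct and follows essentially the same route as the paper: take $D_k$ controlled by $L\gamma_k\|H_k\|$ via \ref{a:lipschitz} and \eqref{eq:sa-gen}, then invoke Lemmas \ref{lem:diminishing-implies-waning} and \ref{lem:diminishing-rate-implies-strong-waning}. Your truncation $D_k=\min\{1,L\gamma_k\|H_k\|\}$ is in fact slightly more careful than the paper, which takes $D_k=L\gamma_k\|H_k\|$ without ensuring $D_k\in[0,1]$ as \ref{a:waning} formally requires.
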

\begin{proof}
Thanks to \ref{a:lipschitz}, we may take $D_k = L \| S_k - S_{k-1} \| = \gamma_k L \| H_k \|$. The condition in \eqref{item:sa-diminishing} implies diminishing adaptation \ref{a:diminishing}, so the claim follows from Lemma \ref{lem:diminishing-implies-waning}, and \eqref{item:sa-with-rate} follows from Lemma \ref{lem:diminishing-rate-implies-strong-waning}.
\end{proof}

\begin{corollary}
   Under \ref{a:metropolis}, the adaptations in Examples \ref{ex:am} and \ref{ex:ram}, using modified $\tilde{H}_k$ as in \eqref{eq:constrained-sa}, satisfy the strong law of large numbers \eqref{eq:lln} for bounded $\varphi$.
\end{corollary}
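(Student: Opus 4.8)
The plan is to obtain the strong law of large numbers by invoking the main abstract convergence theorem in its strong ($p=1$) form, so that the whole task reduces to verifying that each of the two adaptation schemes is strongly $1$-waning in the sense of \ref{a:waning}. The structural hypotheses of that theorem are immediate: \ref{a:markov} holds by construction of the adaptive algorithms, \ref{a:metropolis} yields both \ref{a:simultaneous-uniform} and \ref{a:lipschitz} via Lemma \ref{lem:bounded-rwm-adapt}, and $\varphi$ is assumed bounded. Thus only the waning condition remains to be checked.

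To establish strong $1$-waning I would apply Lemma \ref{lem:sa-diminishing}\eqref{item:sa-with-rate} with $p=1$. Since the updates use the modified increment $\tilde{H}_k$ of \eqref{eq:constrained-sa}, Remark \ref{rem:stabilisation} guarantees $\| S_k - S_{k-1} \| \le \gamma_k \| H_k \|$, so taking $D_k = L \| S_k - S_{k-1} \| \le \gamma_k L \| H_k \|$ places us exactly within the hypotheses of that lemma. It therefore suffices to verify the summability $\sum_{k=1}^\infty \frac{\gamma_k}{k} \E[\| H_k \|] < \infty$ for each example.

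The heart of the argument---and the only genuinely example-specific step---is a uniform moment bound $\sup_k \E[\| H_k \|] \le B < \infty$. This rests on two facts supplied by \ref{a:metropolis}: the state space $\X$ is compact, so $\| X_k \| \le R$ for some $R<\infty$; and the stabilisation keeps $S_k$ inside $\Sp$, whose matrices have eigenvalues confined to $[a,b]$, so $\| S_{k-1} \| \le b$. For Example \ref{ex:ram} this is direct, as the factor $\alpha_k - \alpha_*$ lies in $[-1,1]$, the rank-one matrix $Z_k Z_k^T / \| Z_k \|^2$ has unit operator norm, and the two flanking copies of $S_{k-1}$ contribute at most $b^2$, giving a deterministic bound $\| H_k \| \le b^2$. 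For Example \ref{ex:am} one uses $\| X_k X_k^T \| \le R^2$ together with $\| \Sigma_{k-1} \| \le b$; the slight subtlety that $S_k = (\mu_k,\Sigma_k)$ while only the covariance is constrained by $\Sp$ is harmless, since $P_{(\mu,\Sigma)} = P_\Sigma$ forces $D_k$ to depend only on $\| \Sigma_k - \Sigma_{k-1} \|$, so that only the covariance component $X_k X_k^T - \Sigma_{k-1}$ of $H_k$ enters the bound.

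With $\E[\| H_k \|] \le B$ in hand, summability is a one-line check: for Example \ref{ex:am}, $\gamma_k = k^{-1}$ gives $\sum_k \frac{\gamma_k}{k}\E[\|H_k\|] \le B \sum_k k^{-2} < \infty$, and for Example \ref{ex:ram}, $\gamma_k = k^{-2/3}$ gives $\le B \sum_k k^{-5/3} < \infty$. Hence both schemes are strongly $1$-waning, and the strong law of large numbers \eqref{eq:lln} follows. I expect the main obstacle to be precisely the uniform moment bound on $H_k$---in particular, confirming that the stabilisation genuinely confines $S_k$ to $\Sp$ and correctly handling the mean/covariance split in the adaptive Metropolis case---rather than the elementary summability estimates.
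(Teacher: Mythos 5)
Your proposal is correct and follows essentially the same route as the paper's proof: verify \ref{a:simultaneous-uniform} and \ref{a:lipschitz} via Lemma \ref{lem:bounded-rwm-adapt}, note that $\|\tilde H_k\|$ is bounded (by compactness of $\X$ and the eigenvalue constraint on $\Sp$) so that $\sum_k \gamma_k k^{-1}\E[\|\tilde H_k\|]<\infty$, and conclude strong $1$-waning from Lemma \ref{lem:sa-diminishing}\eqref{item:sa-with-rate}. You merely spell out the boundedness of $H_k$ and the mean/covariance bookkeeping for Example \ref{ex:am} that the paper leaves implicit, and you correctly cite the main convergence theorem (rather than Theorem \ref{thm:waning-convergence} alone) for the final step.
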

\begin{proof}
Because $\| H_k \|$ are bounded and $\sum_k \gamma_k/k^{-1}<\infty$ in both Examples \ref{ex:am} and \ref{ex:ram}, the adaptations are strongly 1-waning, so the result follows from Theorem \ref{thm:waning-convergence}.
\end{proof}

We conclude this section by showing that under \ref{a:simultaneous-uniform} and \ref{a:lipschitz}, if $S_n$ converges to a constant $s_\infty\in\Sp$, then a central limit theorem holds.

\begin{lemma}
   \label{lem:convergence-continuity-clt}
   Suppose $\varphi\in L^\infty$, \ref{a:markov} and \ref{a:simultaneous-uniform} hold, and furthermore:
   \begin{enumerate}[(i)]
      \item \label{item:ensure-half-waning} $\sum_{k\ge 1} \frac{\eta_k}{k^{1/2}} \E[\| H_n \|] <\infty$, and
      \item \label{item:convergent-adaptation} $S_k\to s_\infty\in\Sp$ (a.s.),
   \end{enumerate}
   then the central limit theorem \eqref{eq:clt} holds with the asymptotic variance
   \begin{equation}
   \sigma_\varphi^2 = \pi(g_{s_\infty}^2 - (P_{s_\infty} g_{s_\infty})^2) \in [0,\infty).
   \label{eq:sigma-varphi-clt}
   \end{equation}
\end{lemma}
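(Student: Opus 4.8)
The plan is to deduce the central limit theorem from the main abstract convergence theorem, item \ref{item:clt}: it suffices to verify that the adaptation is $1/2$-waning and that Assumption \ref{a:limiting-poisson} holds with the claimed $\sigma_\varphi^2$. The waning part is immediate. Condition \eqref{item:ensure-half-waning} is precisely the hypothesis of Lemma \ref{lem:sa-diminishing}\eqref{item:sa-with-rate} with $p=1/2$ (here we use that \ref{a:lipschitz} holds throughout this section), so the adaptation is strongly $1/2$-waning. Moreover, writing $\frac{1}{n}\sum_{k=1}^n D_k = \frac{1}{\sqrt n}\big(\frac{1}{\sqrt n}\sum_{k=1}^n D_k\big)$ shows that strong $1/2$-waning implies strong $1$-waning; hence the strong law of large numbers \eqref{eq:lln} is already available for \emph{every} bounded test function via items \ref{item:slln} of the main theorem. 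We will use this to handle the limiting variance.

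The substance is verifying \ref{a:limiting-poisson}. Introduce the conditional variance function $h_s(x) = P_s g_s^2(x) - (P_s g_s)^2(x)$, which is nonnegative pointwise (it is the variance of $g_s$ under $P_s(x,\uarg)$), bounded uniformly in $s$ by Theorem \ref{thm:bounded}, and satisfies $\pi(h_{s_\infty}) = \pi(g_{s_\infty}^2) - \pi\big((P_{s_\infty}g_{s_\infty})^2\big) = \sigma_\varphi^2$ by $\pi$-invariance of $P_{s_\infty}$; in particular $\sigma_\varphi^2\in[0,\infty)$. I would first prove that $s\mapsto h_s$ is Lipschitz in the supremum norm. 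Expanding $h_s - h_{s'}$ into the differences $P_s(g_s^2 - g_{s'}^2)$, $(P_s - P_{s'})g_{s'}^2$, and the analogous terms from $(P_s g_s)^2 - (P_{s'}g_{s'})^2$, and repeatedly applying $\|(P_s - P_{s'})f\|_\infty \le \osc(f)\,D(s,s')$ (from the total variation definition), the uniform bound $\|g_s\|_\infty \le C(1-\rho)^{-1}\osc(\varphi)$, and the Lipschitz bound on $g_s$ from Lemma \ref{lemma:ineq-D}, every factor stays uniformly bounded and each difference reduces to $\|g_s - g_{s'}\|_\infty$ or $\|(P_s-P_{s'})f\|_\infty$. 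This yields $\|h_s - h_{s'}\|_\infty \le K\,D(s,s') \le KL\,\|s-s'\|$, where $D(s,s') = \sup_x d_\tv\big(P_s(x,\uarg),P_{s'}(x,\uarg)\big)$.

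With this in hand I would split
\begin{equation*}
\frac{1}{n}\sum_{k=1}^n h_{S_{k-1}}(X_{k-1}) = \frac{1}{n}\sum_{k=1}^n h_{s_\infty}(X_{k-1}) + \frac{1}{n}\sum_{k=1}^n \big[h_{S_{k-1}}(X_{k-1}) - h_{s_\infty}(X_{k-1})\big].
\end{equation*}
The second sum is bounded in absolute value by $\frac{KL}{n}\sum_{k=1}^n \|S_{k-1} - s_\infty\|$, which tends to $0$ almost surely by condition \eqref{item:convergent-adaptation} and Cesàro averaging (Lemma \ref{lem:convergent-serie-for-convergent-sequence}). For the first sum, $h_{s_\infty}$ is a fixed bounded function, so the strong law of large numbers noted above gives $\frac{1}{n}\sum_{k=1}^n h_{s_\infty}(X_k) \to \pi(h_{s_\infty})$ a.s.; shifting the index by one alters the average by $\frac{1}{n}\big[h_{s_\infty}(X_0) - h_{s_\infty}(X_n)\big]\to 0$, which is harmless. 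Combining the two pieces gives $\frac{1}{n}\sum_{k=1}^n h_{S_{k-1}}(X_{k-1}) \to \sigma_\varphi^2$ (a.s., hence in probability), establishing \ref{a:limiting-poisson}, and item \ref{item:clt} of the main theorem then delivers \eqref{eq:clt}.

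The main obstacle is the Lipschitz estimate for the quadratic functional $s\mapsto h_s$: it is routine but somewhat lengthy, and the only real care is to keep every factor uniformly bounded via Theorem \ref{thm:bounded} so that all differences collapse to quantities controlled by $D(s,s')$ and thereby, through \ref{a:lipschitz}, by $\|s-s'\|$. Everything else is a direct application of already-proved results.
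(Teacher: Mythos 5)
Your proposal is correct and follows essentially the same route as the paper's own proof: reduce to verifying strong $1/2$-waning via Lemma \ref{lem:sa-diminishing}\eqref{item:sa-with-rate} and Assumption \ref{a:limiting-poisson}, establish a Lipschitz bound $\|h_s - h_{s'}\|_\infty \le K\|s-s'\|$ for $h_s = P_s g_s^2 - (P_s g_s)^2$ using Lemma \ref{lemma:ineq-D} and \ref{a:lipschitz}, and then split the empirical average into a fixed-parameter part handled by the strong law of large numbers and a remainder controlled by $\|S_{k-1}-s_\infty\|$ and Ces\`aro averaging. You are in fact slightly more explicit than the paper in noting that strong $1/2$-waning implies strong $1$-waning (so the SLLN applies to $h_{s_\infty}$) and in handling the index shift.
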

\begin{proof}
   The result follows from Theorem \ref{thm:waning-convergence}, because \eqref{item:ensure-half-waning} implies that adaptation is strongly $1/2$-waning (Lemma \ref{lem:sa-diminishing}), once we establish \ref{a:limiting-poisson} with the claimed $\sigma_\varphi^2$, that is,
   $$
   \frac{1}{n} \sum_{k=1}^n h_{S_{k-1}}(X_{k-1}) \xrightarrow{n\to\infty} \sigma_\varphi^2, \quad\text{where}\quad h_s(x) = P_s g_s^2(x) - (P_s g_s)^2(x).
   $$
   In order to prove this, we will first show that 
   $$
   \| h_s - h_{s'} \|_\infty \le \| P_s g_s^2 - P_{s'} g_{s'}^2\|_\infty + \| (P_s g_s)^2 (P_{s'} g_{s'})^2\|_\infty \le C_h \| s - s' \|.
   $$
   Recall that $g_s$ are bounded (Theorem \ref{thm:bounded}) and Lemma \ref{lemma:ineq-D} ensures that $|g_s(x) - g_{s'}(x)| \le C_1 | s - s'|$ for a constant $C_1<\infty$ that only depends on $\|\varphi\|_\infty$, $C$ and $\rho$ of \ref{a:simultaneous-uniform}.  Therefore,
   $$
   \| P_s g_s^2 - P_{s'} g_{s'}^2\|_\infty 
   \le \| P_s (g_s^2 - g_{s'}^2)\|_\infty + \| (P_s-P_{s'}) g_{s'}^2\|_\infty
   \le \| g_s^2 - g_{s'}^2\|_\infty + L \| s - s'\|  \mathrm{osc}(g_{s'}^2),
   $$
   and because $x\mapsto x^2$ is Lipschitz on compacts, we conclude that $\| P_s g_s^2 - P_{s'} g_{s'}^2\|_\infty \le C_1 \| s - s' \|$.
   Observe that the upper bound \eqref{eq:poisson-trinagle} in Lemma \ref{lemma:ineq-D} holds also for $\| P_s g_s - P_{s'} g_{s'} \|_\infty$, so $|(P_s g_s)^2(x) - (P_{s'} g_{s'})^2(x)| \le C_2 \| s- s'\|$. 

   Let us write
   $$
   \frac{1}{n} \sum_{k=1}^n h_{S_{k-1}}(X_{k-1}) = \frac{1}{n} \sum_{k=1}^n h_{s_\infty}(X_{k-1})
   + \frac{1}{n} \sum_{k=1}^n \big[h_{S_{k-1}}(X_{k-1}) - h_{s_\infty}(X_{k-1})\big].
   $$
   Because $h_{s_\infty}\in L^\infty$, the first term converges to $\pi(h_{s_\infty}) = \sigma_\varphi^2$ by the strong law of large numbers. For the second term, we observe that
   \begin{equation*}
   \bigg| \frac{1}{n} \sum_{k=1}^n \big[h_{S_{k-1}}(X_{k-1}) - h_{s_\infty}(X_{k-1})\big]\bigg|
   \le  \frac{1}{n} \sum_{k=1}^n C_h \| S_{k-1} - s_\infty \|
   \xrightarrow{n\to\infty} 0. \qedhere
   \end{equation*}
\end{proof}

Proving the convergence $S_n\to s_\infty$ as required in Lemma \ref{lem:convergence-continuity-clt} is out of the scope of this paper, but we discuss the matter later in Section \ref{sec:convergence}

\section{Increasingly rare adaptation}
\label{sec:air}

While an continuity condition similar to \ref{a:lipschitz} often holds in practice, it may be difficult to prove (or may not hold at all). For such situations, it may be easier (or necessary) to modify the type of adaptation, to ensure its validity. Recently, it was suggested that adaptations can be applied \emph{increasingly rarely} \citep{chimisov-latuszynski-roberts}.
Here, the adaptation is only applied at pre-specified times, less often as the simulation progresses, and allows to bypass any requirement of continuity. This leads to a general and safe recipe for developing adaptive algorithms for a variety of complex scenarios.

The focus in earlier studies on the topic \citep{chimisov-latuszynski-roberts,hofstadler-latuszynski-roberts-rudolf} has been on constant or independently randomised adaptation times. We consider the following setting, where adaptation times can be random, and may depend on the adaptation process. We assume only \ref{a:markov}, and define two random variables:
\begin{enumerate}
   \item the `change' events $C_k = \{S_k \neq S_{k-1}\}$, and
   \item the `adaptation times' $\tau_1 = \inf \{ k\ge 1\given C_k \}$ and 
   $\tau_j = \inf\{k> \tau_{j-1} \given C_k \}$ for $j\ge 2$,   
\end{enumerate}
with the convention that $\tau_j = \infty$ if $\tau_{j-1}=\infty$ or if no such $k$ exists, and we denote $\tau = \{\tau_j\}_{j\ge 1}$. 

\begin{assumption}
   \label{a:update-times}
    For some $p>0$, one of the following holds:
$$
 \sum_{k=1}^\infty \frac{\P(C_k)}{k^p} < \infty \qquad \text{or}\qquad 
 \sum_{j=1}^\infty \E\big[ \tau_j^{-p} \big] < \infty.
$$
\end{assumption}

\begin{lemma}
   \label{lem:air-waning}
Assumption \ref{a:update-times} implies strong $p$-waning \ref{a:waning}.
\end{lemma}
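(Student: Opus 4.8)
The plan is to choose the waning bound $D_k$ as the indicator of a parameter change, to observe that both alternatives in \ref{a:update-times} reduce to the single rate condition \ref{a:diminishing-with-rate}, and then to invoke Lemma \ref{lem:diminishing-rate-implies-strong-waning} to conclude.

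First I would verify that $D_k = \charfun{C_k}$ is an admissible choice in \ref{a:waning}. On the event $C_k\C$ we have $S_k = S_{k-1}$, so $\sup_{x\in\X} d_\tv\big(P_{S_k}(x,\uarg), P_{S_{k-1}}(x,\uarg)\big) = 0$; on $C_k$ the total variation distance never exceeds $1$. Hence $\sup_{x\in\X} d_\tv\big(P_{S_k}(x,\uarg), P_{S_{k-1}}(x,\uarg)\big) \le \charfun{C_k} = D_k$, which takes values in $\{0,1\}\subset[0,1]$ as required, and moreover $\E[D_k] = \P(C_k)$.

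The key observation is that the two alternatives in \ref{a:update-times} are in fact the same quantity. For each realisation, the indices $k$ with $\charfun{C_k}=1$ are exactly the finite adaptation times $\{\tau_j\}$, so pathwise $\sum_{k=1}^\infty \charfun{C_k}\, k^{-p} = \sum_{j\ge 1} \tau_j^{-p}$, using the convention $\tau_j^{-p}=0$ when $\tau_j=\infty$. Taking expectations and applying Tonelli's theorem (all summands are nonnegative), I obtain $\sum_{k=1}^\infty \P(C_k)\, k^{-p} = \sum_{j\ge 1}\E\big[\tau_j^{-p}\big]$. Since $\E[D_k]=\P(C_k)$, either form of \ref{a:update-times} is precisely $\sum_{k=1}^\infty \E[D_k]\, k^{-p} < \infty$, which is exactly Assumption \ref{a:diminishing-with-rate} with rate $p$. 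With this in hand, Lemma \ref{lem:diminishing-rate-implies-strong-waning} immediately gives strong $p$-waning, finishing the proof.

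The only genuinely substantive step is the Tonelli identity linking the two sums; everything else is bookkeeping. I expect the enumeration of change indices by $\{\tau_j\}$ to be the point needing the most care, since one must handle the convention $\tau_j=\infty$ and confirm the pathwise correspondence before swapping expectation and summation. As an alternative that avoids the identity, one could treat the second form directly: $\sum_j \E[\tau_j^{-p}]<\infty$ forces $\sum_j \tau_j^{-p}<\infty$ almost surely by monotone convergence, and a short tail estimate then shows $n^{-p}\,\#\{j:\tau_j\le n\}\to 0$; but the Tonelli reduction to \ref{a:diminishing-with-rate} is cleaner and reuses the machinery already developed.
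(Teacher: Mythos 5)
Your proof is correct and follows essentially the same route as the paper: both take $D_k = \charfun{C_k}$, reduce the problem to Assumption \ref{a:diminishing-with-rate} via Lemma \ref{lem:diminishing-rate-implies-strong-waning}, and use a Tonelli/monotone-convergence interchange to identify $\sum_{k\ge 1}\P(C_k)k^{-p}$ with $\sum_{j\ge 1}\E[\tau_j^{-p}]$. The only cosmetic difference is that you establish the identity pathwise before taking expectations, whereas the paper writes $\P(C_k)=\sum_{j}\P(k=\tau_j)$ and swaps the order of summation; these are the same argument.
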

\begin{proof}
   We may clearly take $D_k = 1(C_k)$ in \ref{a:waning}, and write
   $$
      \sum_{k=1}^\infty \frac{\E[D_k]}{k^p} = \sum_{k=1}^\infty \frac{\P[C_k]}{k^p} < \infty,
   $$
   if the first condition of \ref{a:update-times} holds. For the latter, write
   $$
         \P[C_k] = \P(k\in \tau) = \sum_{j=1}^\infty \P(k = \tau_j),
   $$
   so, changing the order of summation by monotone convergence, we have
   $$
   \sum_{k=1}^\infty \frac{\P[C_k]}{k^p}
   = \sum_{j=1}^\infty \sum_{k=1}^\infty \frac{\P(k = \tau_j) }{k^p}  
   = \sum_{j=1}^\infty \E\bigg[ \frac{1}{\tau_j^p}\bigg] <\infty,
   $$
   Lemma \ref{lem:diminishing-rate-implies-strong-waning} ensures $p$-strong waning.
\end{proof}

Lemma \ref{lem:air-waning} with Theorem \ref{thm:waning-convergence} implies the following:

\begin{theorem}
Suppose that \ref{a:markov}, \ref{a:simultaneous-uniform} and \ref{a:update-times} hold for $p=1$, 
then, for any bounded $\varphi$, the strong law of large numbers \eqref{eq:lln} holds.
\end{theorem}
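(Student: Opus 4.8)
The plan is to assemble the results already established, since this theorem is essentially the instance of the martingale decomposition \eqref{eq:main-decomposition} in which each of the three terms vanishes after division by $n$, almost surely. First I would record that because $\varphi\in L^\infty$ and \ref{a:simultaneous-uniform} holds, Theorem \ref{thm:bounded} furnishes a solution of the Poisson equation \ref{a:poisson} via the Neumann series $g_s = \sum_{k\ge 0} P_s^k\bar\varphi$, with the uniform bound $\|g_s\|_\infty \le C(1-\rho)^{-1}\osc(\varphi)$. This legitimises writing the decomposition $n^{-1}\sum_{k=1}^n[\varphi(X_k)-\pi(\varphi)] = M_n/n + A_n/n + R_n/n$ and supplies the boundedness that the subsequent lemmas rely on.

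Next I would dispatch the two terms that do not depend on the adaptation mechanism. By Lemma \ref{lem:control-RM}, under \ref{a:markov} and \ref{a:simultaneous-uniform} alone, both $R_n/n\to 0$ and $M_n/n\to 0$ almost surely: the former because $R_n$ is a telescoping difference of uniformly bounded quantities, and the latter from the $L^2$-bounded martingale $V_n=\sum_{k\le n}\Delta_k/k$ together with Kronecker's lemma. No hypothesis beyond \ref{a:markov} and \ref{a:simultaneous-uniform} is needed here.

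The only term that calls on \ref{a:update-times} is the adaptation term $A_n$. Here I would invoke Lemma \ref{lem:air-waning}: taking $D_k = \mathbf{1}(C_k)$, the summability $\sum_k \P(C_k)/k < \infty$ (equivalently the $\tau_j$ condition) forces $n^{-1}\sum_{k\le n} D_k \to 0$ almost surely, so that \ref{a:update-times} with $p=1$ makes the adaptation strongly $1$-waning in the sense of \ref{a:waning}. Applying Theorem \ref{thm:waning-convergence} with $p=1$ then gives $A_n/n\to 0$ almost surely.

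Finally I would combine the three almost-sure limits: since each of $M_n/n$, $A_n/n$ and $R_n/n$ tends to $0$ almost surely, so does their sum, which is precisely \eqref{eq:lln} holding almost surely. There is no isolated analytic obstacle, as all the substantive work lives in the earlier lemmas; the one point demanding care is the \emph{mode} of convergence. It is essential that the waning be \emph{strong} rather than merely weak, so that $A_n/n\to 0$ holds almost surely (and not only in probability), and this is exactly what Lemma \ref{lem:air-waning} extracts from \ref{a:update-times} with $p=1$.
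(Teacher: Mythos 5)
Your proof is correct and follows essentially the same route as the paper, which likewise obtains the result by combining Lemma \ref{lem:air-waning} (Assumption \ref{a:update-times} with $p=1$ implies strong $1$-waning) with Theorem \ref{thm:waning-convergence} and the earlier control of $M_n/n$ and $R_n/n$ from Lemma \ref{lem:control-RM}. Your explicit attention to the almost-sure (rather than in-probability) mode of convergence for $A_n/n$ is exactly the right point of care.
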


The following example illustrates that even very slowly increasing inter-adaptation times $\tau_k - \tau_{k-1}$ are enough to guarantee a law of large numbers.

\begin{example}
If $\tau_k = \sum_{j=1}^k n_j$ with increments $n_j \ge c \log^{1+\epsilon}(j)$ for some $c,\epsilon>0$, then it is not hard to verify that $\tau_k \ge c' k\log^{1+\epsilon}(k)$ for some $c'>0$, at least for $k$ large enough. Therefore, \ref{a:update-times} holds with $p=1$.
\end{example}

The following type `hybrid' scheme between stochastic approximation and `increasingly rare type' stochastic optimization scheme was used in \citep{andrieu-vihola} for maximum likelihood estimation with particle independent Metropolis--Hastings \citep{andrieu-doucet-holenstein}.

\begin{example}
Suppose that $S_k$ is updated with a scheme similar to \eqref{eq:sa-gen}:
$$
   S_k = S_{k-1} + \Gamma_k H_k,
$$
where $\Gamma_k$ are random step sizes defined as
$$
\Gamma_k = \gamma_k 1(U_k \le \eta_k),
$$
where $U_k$ are independent uniform $[0,1]$ and $\eta_k$ are `activation' probabilities. 
If $\eta_k  \le c \log^{-(1+\epsilon)}(k)$, we may estimate directly
$$
   \sum_{k=1}^\infty \frac{\P[C_k]}{k}  \le \sum_{k=1}^\infty \frac{c}{k \log^{1+\epsilon}(k)} < \infty,
$$
so \ref{a:update-times} holds with $p=1$.
\end{example}

Increasingly rare adaptation is tempting because it avoids many technical aspects in the theory, in particular, completely bypassing continuity requirement such as \ref{a:lipschitz}. In practice, it may result in less efficient \emph{initial} adaptation. Namely, if $S_k$ is at a bad value, which is kept constant for some time, computational effort can be lost. Continuous adaptations can be more efficient in this regard. It may, however, be, that increasingly rare adaptation is \emph{pre-asymptotically} better: once $S_k$ has already settled to a good value, increasingly rare adaptation can have less perturbations due to the adaptation.  For this reason, combinations of initial continuous adaptation, which becomes increasingly rare later (perhaps in an adaptive manner!) can be worth investigating, too. Theorem \ref{thm:waning-convergence} suggests convergent adaptations, continuous or increasingly rarely adapted, are equally efficient \emph{asymptotically}.

\section{Inhomogeneous Markov chains and external adaptation}
\label{sec:inhomog}

So far, we have discussed fairly general adaptive MCMC schemes. While the terms $R_n$ and $M_n$ can often be controlled quite precisely, the term $A_n$ can only be upper bounded, and usually the upper bound is relatively pessimistic. Sampling with conventinal MCMC, that is, using fixed $P_{S_k}=P_s=P$ for all $k\ge 1$ fits our setting. Because $S_k \equiv s$, the adaptation term vanishes: $A_n \equiv 0$. In this case, we may take $D_k \equiv 0$ in \ref{a:waning}, so our estimates are sharp, too.

In contrast, if $S_k$ come from an exogenous process (independent of $X_k$), we essentially reduce to sampling with an inhomogeneous Markov chain with transitions $P_{s_1}, P_{s_2},\ldots$ Of course, if $D_k \to 0$ in a suitable manner, so that waning adaptation \ref{a:waning} holds, our results apply. However, \ref{a:waning} is not necessary for ergodicity. If $S_k=s_k$ are fixed, each term of $A_k$ is zero-mean under stationarity (because $\pi(g_s)=0$ by Theorem \ref{thm:bounded}). Therefore, one expects that $A_n/n\to 0$ can hold even if all $s_k$ differ, and even if all $D_k$ \ref{a:waning} are bounded away from zero: $D_k > \epsilon >0$ for all $k\ge 1$. This is indeed true under a slightly stronger condition than \ref{a:simultaneous-uniform}:
\begin{assumption}[Uniform Dobrushin--Doeblin condition]
   \label{a:dobrushin}
Suppose that there exists $\beta<1$ such that for all $s\in\Sp$
$$
   \sup_{x,y\in\X} d_\tv \big(P_s(x,\uarg), P_s(y,\uarg) \big)
   = \sup_{\mu \neq \nu} \frac{d_\tv \big(\mu P_s, \nu P_s \big)}{d_\tv(\mu,\nu)}
   = \sup_{\osc(f)\le 1} \osc(P_s f) \le \beta.
$$
\end{assumption}
It is not hard to see that \ref{a:dobrushin} implies \ref{a:simultaneous-uniform} with $C=1$ and $\rho=\beta$, but not vice versa.

\begin{proposition}
   \label{prop:natural}
   Suppose that \ref{a:dobrushin} holds and $S_k=s_k$ are constant and $\varphi\in L^\infty$. Then, $A_n/n\to 0$ in probability and therefore a weak law of large numbers \eqref{eq:lln} holds.
\end{proposition}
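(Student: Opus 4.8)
The plan is to prove $A_n/n\to 0$ in $L^2(\P)$, which immediately gives convergence in probability. Write $\psi_k = g_{s_k}-g_{s_{k-1}}$, so that $A_n = \sum_{k=1}^n \psi_k(X_k)$. Since the $s_k$ are deterministic, each $\psi_k$ is a fixed bounded function, and by Theorem \ref{thm:bounded} (applied with $C=1$, $\rho=\beta$, as \ref{a:dobrushin} implies \ref{a:simultaneous-uniform}) we have $\psi_k\in L_0^\infty(\pi)$ with $\|\psi_k\|_\infty \le B := 2(1-\beta)^{-1}\osc(\varphi)$. The crude per-term bound $|\psi_k(X_k)|\le D_k$ from \ref{a:waning} is useless here, because the $D_k$ need not vanish; instead I would exploit that every $\psi_k$ is $\pi$-centred and that, under \ref{a:dobrushin}, products of the (distinct) kernels $P_{s_i}$ contract the oscillation geometrically. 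Bounding $\E[A_n^2]$ then reduces to controlling the covariances $\E[\psi_j(X_j)\psi_k(X_k)]$.

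For $j<k$, the tower property and the Markov structure \ref{a:markov} give $\E[\psi_k(X_k)\mid\F_j] = (P_{s_j}P_{s_{j+1}}\cdots P_{s_{k-1}}\psi_k)(X_j)$, with measurability handled as in Lemma \ref{lemma:martingale}. The key point will be the one-step contraction $\osc(P_s f)\le \beta\,\osc(f)$ furnished by \ref{a:dobrushin}: iterating it across the $k-j$ distinct kernels yields $\osc(P_{s_j}\cdots P_{s_{k-1}}\psi_k)\le \beta^{k-j}\osc(\psi_k)$. Since each $P_{s_i}$ is $\pi$-invariant, the function $P_{s_j}\cdots P_{s_{k-1}}\psi_k$ stays $\pi$-centred, so the elementary bound $\|h\|_\infty\le\osc(h)$ (valid for centred $h$) gives $\|\E[\psi_k(X_k)\mid\F_j]\|_\infty \le \beta^{k-j}\osc(\psi_k)\le 2B\beta^{k-j}$. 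Pulling out the $\F_j$-measurable factor $\psi_j(X_j)$ then bounds the covariance by $|\E[\psi_j(X_j)\psi_k(X_k)]|\le 2B^2\beta^{k-j}$.

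Summing, the diagonal contributes $\sum_{k=1}^n \E[\psi_k(X_k)^2]\le nB^2$, and the off-diagonal terms contribute $2\sum_{1\le j<k\le n}|\E[\psi_j(X_j)\psi_k(X_k)]| \le 4B^2\sum_{d=1}^{n-1}(n-d)\beta^d\le 4B^2 n\,\beta(1-\beta)^{-1}$. Hence $\E[A_n^2]=O(n)$, so $\E[(A_n/n)^2]=O(1/n)\to 0$ and $A_n/n\to 0$ in probability. Combining this with Lemma \ref{lem:control-RM} (applicable since \ref{a:dobrushin}$\Rightarrow$\ref{a:simultaneous-uniform}), which gives $M_n/n\to 0$ and $R_n/n\to 0$ a.s., the decomposition \eqref{eq:main-decomposition} yields the weak law of large numbers \eqref{eq:lln}.

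The main obstacle is precisely the step that forces \ref{a:dobrushin} rather than the weaker \ref{a:simultaneous-uniform}: to control $\|\E[\psi_k(X_k)\mid\F_j]\|_\infty$ one must compose \emph{different} kernels $P_{s_j},\ldots,P_{s_{k-1}}$. The \ref{a:simultaneous-uniform} bound controls only the iterates $P_s^m$ of a single kernel, and the associated operator norm on $L_0^\infty(\pi)$ (Lemma \ref{lemma:bounded}) is only $\le 2C\rho$, which may exceed $1$ and blow up under composition. The Dobrushin coefficient $\beta<1$ instead gives a genuine one-step contraction of the oscillation that is stable under composing distinct kernels, and this is exactly what makes the geometric decay of the covariances—and hence the cancellation that kills $A_n/n$—go through.
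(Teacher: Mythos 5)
Your proof is correct and follows essentially the same route as the paper: write $A_n$ as a sum of the $\pi$-centred, uniformly bounded increments $g_{s_k}-g_{s_{k-1}}$, bound $\E[A_n^2]$ by conditioning the cross terms and using the one-step Dobrushin contraction of the oscillation across the distinct kernels $P_{s_j},\ldots,P_{s_{k-1}}$, and conclude $\E[A_n^2]=O(n)$ so that $A_n/n\to 0$ in $L^2$ and hence in probability. Your closing remark on why \ref{a:dobrushin} (rather than \ref{a:simultaneous-uniform}) is needed also matches the paper's discussion following the proposition.
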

\begin{proof}
   Because $g_{s}(x)\in L_0^\infty(\pi)$ and $\| g_s\|_\infty \le (1-\beta)^{-1} \mathrm{osc}(\varphi)$ by Theorem \ref{thm:bounded}, the functions $h_k(x) = g_{s_k}(x) - g_{s_{k-1}}(x) \in L_0^\infty(\pi)$ and $\| h_k \|_\infty \le C' = 2 (1-\beta)^{-1} \mathrm{osc}(\varphi)$. We may write $A_n = n^{-1} \sum_{k=1}^n h_k(X_k)$, so 
   $$
   \E[A_n^2] 
   = \frac{1}{n^2} \sum_{k,j=1}^n \E[h_k(X_k)h_j(X_j)] 
   = \frac{1}{n^2} \bigg[ \sum_{k=1}^n \E[h_k^2(X_k)] + 2 \sum_{k=1}^{n-1} \sum_{j=k+1}^n  \E[h_k(X_k)h_j(X_j)]\bigg].
   $$
   The first sum is bounded by $n (C')^2$, and the expectations in the latter sum satisfies
   $$
   \E[h_k(X_k)h_j(X_{j})]
   = \E\big[ h_k(X_k) \E[h_j(X_j)\mid \F_k]\big]
   = \E\big[ h_k(X_k) P_{s_k}\cdots P_{s_{j-1}} h_j(X_k)\big]
   $$
   Let $h_{k:j}(x) = P_{s_k}\cdots P_{s_{j-1}} h_j(x)$, then 
   Assumption \ref{a:dobrushin} yields 
   $$ 
   \osc( h_{k:j} ) 
   \le \beta^{j-k} \osc( h_j ),
   $$
   and because $\pi(h_{k:j}) = \pi(h_j) = 0$, we have $\inf_x h_{k:j}(x)\le 0$ and $\sup_x h_{k:j}(x) \ge 0$, so $\| h_{k:j} \|_\infty \le \osc(h_{k:j})$. Therefore $\big|\E[h(X_k)h(X_j)]\big| \le \| h_k h_{k:j} \|_\infty \le  (C')^2 \beta^{j-k}$, and so
   $$
   \bigg| \sum_{j=k+1}^n  \E[h(X_k)h(X_j)]\bigg|
   \le (C')^2 \sum_{j=k+1}^n  \beta^{j-k} 
   \le (C')^2 \sum_{h=1}^\infty \beta^{h}
   = \frac{(C')^2\beta}{1-\beta}.
   $$
   We conclude that
   \begin{equation*}
   \E[A_n^2] \le \frac{(C')^2}{n} \bigg[ 1 + \frac{2\beta}{1-\beta}\bigg] \xrightarrow{n\to\infty} 0.
   \qedhere
   \end{equation*}
\end{proof}

The weak law of large numbers as in Proposition \ref{prop:natural} is well-known within the analysis of inhomogeneous Markov chains. It requires \ref{a:dobrushin} which is slightly stronger than \ref{a:simultaneous-uniform} with $C=1$. It is natural to ask whether we could establish $A_n/n\to 0$ with just \ref{a:simultaneous-uniform}? The following example shows that the answer is negative in general.
\begin{example}
   \label{ex:cyclic-chains}
   The diagrams in Figure \ref{fig:cyclic-chains} (a) and (b) illustrate two three-state Markov chains $P_a$ and $P_b$, respectively, which both admit $\pi = (1/2,1/4,1/4)$ as their stationary distribution and are irreducible and aperiodic, and therefore uniformly ergodic, which implies that $\{P_a,P_b\}$ satisfies \ref{a:simultaneous-uniform} (but not \ref{a:dobrushin}). Setting $S_k = a$ for odd $k$ and $S_k=b$ for even $k$, and starting at $X_0=2$ results in $X_k=3$ for odd $k$ and $X_k=2$ for even $k$, and therefore $\varphi(x) = 1(x=1)$ fails the law of large numbers \eqref{eq:lln}.
\end{example}

\begin{figure}
   \begin{tabular}{cc}
\begin{tikzpicture}[shorten >=1pt, node distance=2cm, on grid, auto] 
   \node[state] (A1)   {$1$}; 
   \node[state] (A2) [right=of A1] {$2$}; 
   \node[state] (A3) [right=of A2] {$3$}; 
   \path[->] 
    (A1) edge  node {0.5} (A2)
    (A2) edge  node {1.0} (A3)
    (A3) edge[bend left]  node {1.0} (A1);
    \path[->] 
    (A1) edge[loop left]  node {0.5} (A1);
\end{tikzpicture}
&
\begin{tikzpicture}[shorten >=1pt, node distance=2cm, on grid, auto] 
   \node[state] (B1) {$1$}; 
   \node[state] (B2) [right=of B1] {$2$}; 
   \node[state] (B3) [right=of B2] {$3$}; 
    \path[->] 
    (B2) edge  node [above] {1.0} (B1)
    (B3) edge  node [above] {1.0} (B2)
    (B1) edge[bend right] [below] node {0.5} (B3);
    \path[->] 
    (B1) edge[loop left]  node {0.5} (B1);
\end{tikzpicture}
\\ 
(a) & (b)
\end{tabular}
\caption{Example of simultaneously uniformly ergodic transitions whose compositions do not satisfy a law of large numbers.}
\label{fig:cyclic-chains}
\end{figure}
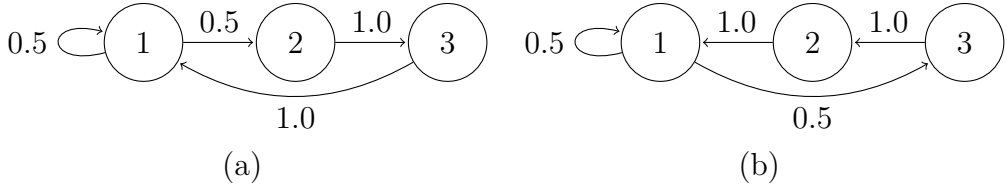

Proposition \ref{prop:natural} illustrates that with constant, or equivalently, exogeneous adaptation, $D_k$ need not necessarily be controlled for a law of large numbers, but as Example \ref{ex:cyclic-chains} illustrates, some structure is still necessary. The zero-mean property of the terms of $A_n$ under independent adaptation suggests that it can introduce less perturbations to the averages, and if adaptations are \emph{nearly} independent, it is plausible that there are less perturbations, too. This can give some theoretical insight to the observed robustness and good empirical performance of parallel chain adaptation \cite[cf.][]{craiu-rosenthal-yang}.

\section{Extensions}
\label{sec:extensions}

The main purpose of this paper is to provide an accessible introduction to the convergence of averages coming from adaptive MCMC schemes. We did not discuss \emph{ergodicity} of adaptive MCMC as defined in \citep{roberts-rosenthal}, that is, convergence of the marginal law of $X_n$ to $\pi$. It is related to the convergence of the averages, but generally a different question: see \citep{chimisov-latuszynski-roberts} for an example of adaptation that satisfies a law of large numbers but does not converge in law.

In addition to different forms of convergence, there are a number of ways the theory presented above can be generalised and extended. We discuss briefly some such extensions below.

\subsection{Beyond simultaneous uniform ergodicity}
\label{sec:non-uniform}

The approach discussed above extends almost directly to the case where the Markov chains satisfy a simultaneous geometric drift and minorisation conditions, as explored in the original paper \citep{andrieu-moulines} which also used the decomposition \eqref{eq:main-decomposition}.  Such conditions involve a `drift' function $V:\X\to[1,\infty)$, and hold quite generally for random-walk chains, in which case $V \simeq \pi^{-\beta}$ for some $\beta\in(0,1)$ \citep[cf.][]{jarner-hansen}. Similar calculations as above carry over to this scenario, when we replace $\| f \|_\infty$ with the weighted supremum norm $\| f \|_V = \sup_x |f(x)|/V(x)$ and the total variation with its analougously defined $V$-weighted variant. 

The recent work of \cite{hofstadler-latuszynski-roberts-rudolf}, which focuses on increasingly rare adaptation, considers $\{P_s\}_{s\in\Sp}$ that are uniform Wasserstein-type contractions; see also the very recent work \cite{brown-rosenthal} which introduces a related `weak' diminishing adaptation. This is a natural generalisation of the presented approach, because total variation is the Wasserstein-1 distance in the case of the discrete metric.

Convergence of ergodic averages can also be verified with weaker than geometric ergodicity/strict contraction assumptions. For instance, a polynomial drift and minorisation can ensure convergence \citep{atchade-fort}. The analysis in sub-geometric scenarios is typically more technical, and the approach in \cite{atchade-fort} is based on \emph{resolvents} $g_s^\epsilon = \sum_{k=0}^\infty (1-\epsilon)^k P^k \bar{\varphi}$ for $\epsilon\in(0,1)$, which exist more generally than solutions of Poisson's equation ($\epsilon=0$).

\subsection{Stability}

In adaptive MCMC, some control of ergodicity is necessary to ensure validity. Our condition \ref{a:simultaneous-uniform} simply requires all Markov transitions to be  `equally well mixing'. Roberts and Rosenthal \citep{roberts-rosenthal} introduced the following quantity, defined for any $\epsilon>0$, to quantify mixing:
$$
  M_\epsilon(x,s) = \inf \{ n\ge 1\given  d_\tv\big( P_s^n(x,\uarg), \pi\big) \le \epsilon \},
$$
and the so-called \emph{containment} condition: $(M_\epsilon(X_n,S_n))_{n\ge 1}$ is bounded in probability, which together with \ref{a:diminishing} ensures (at least) ergodicity \cite[Theorem 2]{roberts-rosenthal}. Proving containment, or some other form of stability, is known to be a notoriously difficult task. A simpler way to ensure this is to investigate $\tilde{M}_\epsilon(x) = \sup_{s\in\Sp} M_\epsilon(x,s)$ instead, but for this to behave well, we typically have to restrict $\Sp$, like in the case of random-walk Metropolis chains \ref{a:metropolis}.

There are some extensions, which avoid such restrictions on the mixing properties of $\{P_s\}_{s\in\Sp}$. The reprojection approach introduced in \cite{andrieu-moulines} involves a growing sequence of sets $\Sp_1\subset\Sp_2\subset\cdots$, each in which a uniform geometric drift and minorisation hold. We start at $\Sp_1$, and if $S_k$ `tries to escape' $\Sp_1$, we `restart' $S_k=s_0\in\Sp_1$ and move to `active' set $\Sp_2$, etc. This is a general scheme, but does come with a number of technical assumptions about the adaptation dynamic. A related approach in \citep{andrieu-vihola} ensures that $S_k\in\Sp_k$, where $\Sp_k$ are growing slow enough, so that loss of ergodicity is controlled.

For some adaptive schemes, it is possible to ensure stability without such added stabilisation mechanisms. For instance, \cite{saksman-vihola} show that the adaptive Metropolis of \ref{ex:am} can be valid on $\X=\R^d$ without upper bounding the covariance, and \cite{vihola-asm} show that acceptance rate adaptations can be stable, too. In some cases, it is possible to develop a joint drift function for $(S_k,X_k)$, which ensures stability \cite{andrieu-tadic-vihola}.

\subsection{Convergence of adaptation}
\label{sec:convergence}

The main focus of this paper has been to ensure a strong law of large numbers with adaptive MCMC. Because many adaptive MCMC are motivated by optimisation (of some proxy that guarantees good mixing), it is natural to expect that many adaptive schemes indeed converge, that is, $S_n\to s_\infty\in\Sp$ (a.s.). Convergence is also a requirement for establishing a central limit theorem, and the limiting value $s_\infty$ dictates the asymptotic efficiency (Lemma \ref{lem:convergence-continuity-clt}).

Stochastic approximation type algorithms (Section \ref{sec:sa}) are the the most natural setting for convergence and indeed \cite{sa-verifiable} establish convergence within the reprojection approach of \cite{andrieu-moulines}. Convergence can also be verified with the expanding projections approach of \citep{andrieu-vihola}, or the joint drift setting of \citep{andrieu-tadic-vihola}, under suitable conditions.

In simple settings, where we enforce $S_k\in\Sp$ by modifying $H_n$ in some way (cf. Remark \ref{rem:stabilisation}), the modifications can be complicated to control. There are some results in the literature on the convergence of projected stochastic approximation, which have issues (personal communication with Błażej Miasojedow and Michał Borowski).

\section{Discussion}
\label{sec:discussion}

We discussed the convergence of averages of adaptive MCMC chains under a general framework \ref{a:markov}, under simultaneous uniform ergodicity \ref{a:simultaneous-uniform} and waning adaptation \ref{a:waning}, which unifies the analysis of stochastic approximation type algorithms (Section \ref{sec:sa}) and other, possibly non-recursively defined schemes, which might be increasingly rarely adapted (Section \ref{sec:air}). We hope that we have convinced the reader about the usefulness of the martingale decomposition \eqref{eq:main-decomposition} in the analysis of various types of adaptive MCMC. In particular, because the `martingale' terms $(M_n)$ and the telescoping `residual' terms $(R_n)$ are usually easy to analyse, and the adaptation terms $(A_n)$ capture the potential perturbations to the averages coming from the adaptation.

There are some adaptive MCMC algorithms which do not fit our framework, beyond those extensions discussed in Section \ref{sec:extensions}.
In some adaptive algorithms, the kernels have different invariant distributions $\pi_s$. This happens often with auxiliary variable schemes, such as with tempering \citep{miasojedow-moulines-vihola} and chains that are allowed to interacting with their past \citep{fort-moulines-priouret}. In such a case, a similar decomposition to \eqref{eq:main-decomposition}, but with additional terms, can still be useful. The same type of decomposition has been extensively used with more general stochastic approximation, which are driven by Markovian noise \citep[cf.][]{benveniste-metivier-priouret}.

There are also many MCMC algorithms which involve adaptation, but are not adaptive MCMC in the sense of this paper. For instance, it is possible to use population of MCMC samplers that use information of other samplers in the proposal \cite[e.g][and references therein]{griffin-walker,sprungk-weissmann-zech}. Some of such samplers may be shown to define Markov chains with the right (marginal) invariant distributions, so there is no need for convergence theory as with adaptive MCMC. However, many of these \emph{algorithms} might benefit from a suitable dependence on the chain's past. We are looking forward to see more `hybrid' algorithms, which can potentially enjoy full efficiency of learning as in adaptive MCMC, but also with population decreasing the perturbation of the adaptation to averages (see also the discussion in Section \ref{sec:inhomog}).

\section*{Acknowledgements}

This work was supported by Research Council of Finland (Finnish Centre of Excellence in Randomness and Structures, grants 346311 and 364216). 

%\bibliographystyle{abbrvnat}
%\bibliography{refs.bib}

\appendix

\section{Details about measurability}
\label{app:measurability}

Proof of the following theorem can be found for instance in \citep[Theorem 3.4]{Billingsley}.
\begin{theorem}[Monotone Class]\label{theorem:monotone-class-theorem}
If $\mathcal{A}$ is an algebra and $\mathcal{M}$ is a monotone class, then $\mathcal{A}\subset \mathcal{M}$ implies $\sigma(\mathcal{A})\subset \mathcal{M}$.
\end{theorem}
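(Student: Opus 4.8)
The plan is to route the argument through the smallest monotone class containing $\mathcal{A}$ and to show that this class is in fact a $\sigma$-algebra. Write $\mathcal{M}_0$ for the intersection of all monotone classes containing $\mathcal{A}$; this intersection is again a monotone class (the defining closure under increasing unions and decreasing intersections survives arbitrary intersections), so $\mathcal{M}_0$ is the smallest monotone class with $\mathcal{A}\subset\mathcal{M}_0$. Since the hypothesised $\mathcal{M}$ is one such class, minimality gives $\mathcal{M}_0\subset\mathcal{M}$, and the whole statement reduces to proving $\sigma(\mathcal{A})\subset\mathcal{M}_0$. For this I would invoke the elementary fact that a family which is simultaneously an algebra and a monotone class is a $\sigma$-algebra: closure under finite unions combined with closure under increasing limits upgrades to closure under countable unions. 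Hence everything comes down to showing that $\mathcal{M}_0$ is an algebra.

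Closure under complementation is the easy half, settled by one good-sets argument. I would put $\mathcal{M}_1=\{A\in\mathcal{M}_0 : A^c\in\mathcal{M}_0\}$, check directly that $\mathcal{M}_1$ is a monotone class (complementation interchanges increasing and decreasing limits), and note $\mathcal{A}\subset\mathcal{M}_1$ because $\mathcal{A}$ is an algebra. Minimality of $\mathcal{M}_0$ then forces $\mathcal{M}_1=\mathcal{M}_0$, so $\mathcal{M}_0$ is closed under complements.

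The main obstacle is closure under intersection, which needs a two-stage good-sets bootstrap. For $A\in\mathcal{M}_0$ set $\mathcal{D}_A=\{B\in\mathcal{M}_0 : A\cap B\in\mathcal{M}_0\}$; I would verify that $\mathcal{D}_A$ is a monotone class and record the symmetry $B\in\mathcal{D}_A \iff A\in\mathcal{D}_B$ valid for $A,B\in\mathcal{M}_0$. The first stage uses that $\mathcal{A}$ is an algebra: for $A\in\mathcal{A}$ every $B\in\mathcal{A}$ satisfies $A\cap B\in\mathcal{A}\subset\mathcal{M}_0$, so $\mathcal{A}\subset\mathcal{D}_A$, and minimality yields $\mathcal{M}_0\subset\mathcal{D}_A$ for all $A\in\mathcal{A}$. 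The second stage feeds this back in: fixing $B\in\mathcal{M}_0$, the previous step gives $A\cap B\in\mathcal{M}_0$ for every $A\in\mathcal{A}$, which by symmetry says $\mathcal{A}\subset\mathcal{D}_B$, and minimality once more gives $\mathcal{M}_0\subset\mathcal{D}_B$. This establishes $A\cap B\in\mathcal{M}_0$ for all $A,B\in\mathcal{M}_0$.

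Combining the two closure properties (and noting $\Omega\in\mathcal{A}\subset\mathcal{M}_0$), the class $\mathcal{M}_0$ is an algebra as well as a monotone class, hence a $\sigma$-algebra containing $\mathcal{A}$, so $\sigma(\mathcal{A})\subset\mathcal{M}_0\subset\mathcal{M}$. The delicate point I would watch is precisely the bootstrap in the intersection step: one cannot obtain $\mathcal{M}_0\subset\mathcal{D}_A$ for all $A\in\mathcal{M}_0$ in a single pass, and it is the algebra structure of $\mathcal{A}$ at the base of each stage, together with the symmetry of the relation, that lets the two passes close the argument.
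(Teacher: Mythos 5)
Your proof is correct. The paper does not prove this theorem itself --- it defers to \citep[Theorem~3.4]{Billingsley} --- and your argument (pass to the minimal monotone class $\mathcal{M}_0\supset\mathcal{A}$, show by good-sets arguments that it is closed under complements and, via the two-stage bootstrap exploiting the symmetry of $\mathcal{D}_A$, under intersections, then observe that an algebra which is a monotone class is a $\sigma$-algebra) is precisely the standard proof given in that reference. Nothing is missing; the point you flag as delicate, namely that the closure under intersections cannot be obtained in a single pass, is indeed the only subtle step.
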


If $(X,\mathcal{X})$ and $(Y,\mathcal{Y})$ are measurable spaces let us denote by $\mathcal{A}_\mathcal{\mathcal{X},\mathcal{Y}}$ the collection of finite disjoint unions of measurable rectangles $A\times B$, where $A\in \mathcal{X}$ and $B\in \mathcal{Y}$. 
The collection $\mathcal{A}_\mathcal{\mathcal{X},\mathcal{Y}}$ is an algebra, which is easily verified.

\begin{corollary}\label{corollary: monotone class theorem}
Let $(X,\mathcal{X})$ and $(Y,\mathcal{Y})$ be measurable spaces. If $\mathcal{M}$ is a monotone class, then $\mathcal{A}_\mathcal{\mathcal{X},\mathcal{Y}}\subset \mathcal{M}\subset \mathcal{X}\otimes \mathcal{Y}$ implies $\mathcal{M}= \mathcal{X}\otimes \mathcal{Y}$.
\end{corollary}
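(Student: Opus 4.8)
The plan is to apply the Monotone Class Theorem (Theorem \ref{theorem:monotone-class-theorem}) essentially verbatim, since all of its hypotheses are already available. The key preliminary observation I would record is the identity $\sigma(\mathcal{A}_{\mathcal{X},\mathcal{Y}}) = \mathcal{X}\otimes\mathcal{Y}$. This holds because $\mathcal{X}\otimes\mathcal{Y}$ is by definition generated by the measurable rectangles $A\times B$ with $A\in\mathcal{X}$ and $B\in\mathcal{Y}$: every such rectangle lies in $\mathcal{A}_{\mathcal{X},\mathcal{Y}}$ (as a one-term disjoint union), whence $\mathcal{X}\otimes\mathcal{Y}\subset\sigma(\mathcal{A}_{\mathcal{X},\mathcal{Y}})$; conversely each element of $\mathcal{A}_{\mathcal{X},\mathcal{Y}}$ is a finite union of rectangles and thus already belongs to $\mathcal{X}\otimes\mathcal{Y}$, giving the reverse inclusion $\sigma(\mathcal{A}_{\mathcal{X},\mathcal{Y}})\subset\mathcal{X}\otimes\mathcal{Y}$.

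With that identity in hand, I would invoke the fact recorded just above the statement that $\mathcal{A}_{\mathcal{X},\mathcal{Y}}$ is an algebra. The hypotheses provide $\mathcal{A}_{\mathcal{X},\mathcal{Y}}\subset\mathcal{M}$ together with $\mathcal{M}$ being a monotone class, so Theorem \ref{theorem:monotone-class-theorem} applies with $\mathcal{A}=\mathcal{A}_{\mathcal{X},\mathcal{Y}}$ and yields $\sigma(\mathcal{A}_{\mathcal{X},\mathcal{Y}})\subset\mathcal{M}$. Rewriting the left-hand side via the first step, this reads $\mathcal{X}\otimes\mathcal{Y}\subset\mathcal{M}$. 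Combining with the remaining hypothesis $\mathcal{M}\subset\mathcal{X}\otimes\mathcal{Y}$ gives the double inclusion, hence $\mathcal{M}=\mathcal{X}\otimes\mathcal{Y}$.

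I do not anticipate a genuine obstacle: the statement is a direct corollary of the Monotone Class Theorem, and the only point needing a line of justification is the identification $\sigma(\mathcal{A}_{\mathcal{X},\mathcal{Y}})=\mathcal{X}\otimes\mathcal{Y}$. The mildest care is simply to make sure the generating class used to define $\mathcal{X}\otimes\mathcal{Y}$ (the rectangles) is correctly seen to generate the same $\sigma$-algebra as the algebra $\mathcal{A}_{\mathcal{X},\mathcal{Y}}$ of their finite disjoint unions, which is routine.
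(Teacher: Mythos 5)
Your proposal is correct and is exactly the argument the paper intends: the corollary is left unproved there precisely because it follows immediately from Theorem \ref{theorem:monotone-class-theorem} applied to the algebra $\mathcal{A}_{\mathcal{X},\mathcal{Y}}$, together with the identification $\sigma(\mathcal{A}_{\mathcal{X},\mathcal{Y}})=\mathcal{X}\otimes\mathcal{Y}$ that you justify. No gaps.
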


\begin{lemma}\label{lem:(s,x)-measurability}
Let $\{P_s\}_{s\in\Sp}$ be a family of Markov transition probabilities on $\X$ for which the regularity as noted in Remark \ref{rem:measurability} holds.
Then if $h:\Sp\times\X\to \R$ is a bounded $\sigmaSp\otimes\sigmaX$-measurable function, we have that
$$
(s,x)\mapsto \int P_s(x,\ud y)h(s,y)
$$
is bounded $\sigmaSp\otimes\sigmaX$-measurable function.
\end{lemma}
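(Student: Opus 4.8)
The plan is to run a functional monotone class argument anchored at Corollary \ref{corollary: monotone class theorem}. Write $Th(s,x) = \int P_s(x,\ud y)\,h(s,y)$ for the operation in question, note that $|Th(s,x)|\le\|h\|_\infty$ since $P_s(x,\uarg)$ is a probability measure, so boundedness is automatic throughout and only measurability needs work. I would first reduce to indicator functions: let $\mathcal{M}$ be the collection of sets $E\in\sigmaSp\otimes\sigmaX$ for which $(s,x)\mapsto T\charfun{(s,y)\in E}(s,x)$ is $\sigmaSp\otimes\sigmaX$-measurable, and aim to show $\mathcal{M}=\sigmaSp\otimes\sigmaX$, afterwards lifting to bounded $h$ by linearity and a limiting argument.

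First I would verify the claim on measurable rectangles. For $E=A\times B$ with $A\in\sigmaSp$ and $B\in\sigmaX$, the section $\{y:(s,y)\in E\}$ equals $B$ when $s\in A$ and is empty otherwise, so $T\charfun{(s,y)\in E}(s,x) = \charfun{s\in A}\,P_s(x,B)$. This is a product of the $\sigmaSp$-measurable map $\charfun{s\in A}$ and the map $(s,x)\mapsto P_s(x,B)$, which is $\sigmaSp\otimes\sigmaX$-measurable by the regularity condition recorded in Remark \ref{rem:measurability}; hence $A\times B\in\mathcal{M}$. Since $T$ is additive and $\charfun{(s,y)\in\bigsqcup_i E_i} = \sum_i \charfun{(s,y)\in E_i}$ for disjoint $E_i$, the collection $\mathcal{M}$ contains the algebra $\mathcal{A}_{\sigmaSp,\sigmaX}$ of finite disjoint unions of measurable rectangles.

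Next I would check that $\mathcal{M}$ is a monotone class. If $E_n\uparrow E$, then for each fixed $(s,x)$ the integrands increase to $\charfun{(s,y)\in E}$, so monotone convergence gives $T\charfun{(s,y)\in E_n}(s,x)\to T\charfun{(s,y)\in E}(s,x)$ pointwise; a pointwise limit of measurable functions is measurable, so $E\in\mathcal{M}$. For $E_n\downarrow E$ the same conclusion follows from dominated convergence, with the constant $1$ as integrable dominating function. Thus $\mathcal{A}_{\sigmaSp,\sigmaX}\subset\mathcal{M}\subset\sigmaSp\otimes\sigmaX$ with $\mathcal{M}$ a monotone class, and Corollary \ref{corollary: monotone class theorem} yields $\mathcal{M}=\sigmaSp\otimes\sigmaX$, so the conclusion holds for every indicator $\charfun{(s,y)\in E}$.

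Finally I would extend from indicators to bounded measurable $h$. By linearity the conclusion holds for simple functions; for general bounded measurable $h$ I would take simple functions $h_n$ with $|h_n|\le\|h\|_\infty$ and $h_n\to h$ pointwise, so dominated convergence (again dominating by $\|h\|_\infty$) gives $Th_n(s,x)\to Th(s,x)$ pointwise, whence $Th$ is measurable as a pointwise limit of measurable functions. The computation is routine; the one point demanding care, and the reason this is not a direct corollary of the standard kernel-measurability lemma, is that the integrand $h(s,y)$ carries the parameter $s$ simultaneously in the kernel $P_s$ and in the function $y\mapsto h(s,y)$. This coupling is exactly what makes the rectangle base case $T\charfun{(s,y)\in A\times B}(s,x)=\charfun{s\in A}P_s(x,B)$ the crucial ingredient rather than a plain Fubini-type statement, and it is where I would take the most care to keep the two roles of $s$ aligned.
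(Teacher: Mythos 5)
Your proof is correct and follows essentially the same monotone class argument as the paper: verify the claim on measurable rectangles using the regularity of Remark \ref{rem:measurability}, extend to the algebra by linearity, show the monotone class property via convergence theorems, invoke Corollary \ref{corollary: monotone class theorem}, and lift to bounded measurable $h$ through simple functions. The only cosmetic differences are that the paper handles decreasing sequences by passing to complements and extends to bounded functions via positive/negative parts with monotone convergence, whereas you use dominated convergence in both places; these are interchangeable.
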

\begin{proof}
Taking the function as indicator function let us denote the collection of $\sigmaSp\otimes\sigmaX$-measurable sets for which the claim holds:
$$
\mathcal{G}=\biggbraces{E\in \sigmaSp\otimes\sigmaX \given 
(s,x)\mapsto \int P_s(x,\ud y)\charfun{(s,y)\in E} \text{ is $\sigmaSp\otimes\sigmaX$-measurable}}.
$$
Let $A\in \Sp$ and $B\in \X$. Then, $(s,x)\mapsto P_s(x,B)$ and $(s,x)\mapsto 
\charfun{s\in A}$ are $\sigmaSp\otimes\sigmaX$-measurable and therefore
\begin{align*}
(s,x)&\mapsto P_s(x,B)\charfun{s\in A} =\int P_s(x,\ud y)\charfun{(s,y)\in A \times B}
\end{align*} 
is $\sigmaSp\otimes\sigmaX$-measurable and by linearity $A_{\sigmaSp,\sigmaX}\subset \mathcal{G}$.
Let $A_1\subset A_2,\subset \dots\in \mathcal{G}$ be an increasing sequence of sets and $B_1\supset B_2\supset \dots \in \mathcal{G}$ a decreasing sequence of sets.
For any $(s,x)\in \Sp\times \X$ and $E\in \sigmaSp\otimes\sigmaX$ the section $E^s=\braces{y\in \X:(s,y)\in E}$ is $\mathcal{X}$-measurable and thus by the monotone convergence theorem we have for the increasing $(A_i)_{i=1}^\infty$, that
\begin{align}\label{equation:measurable-increasing-limits-1}
\begin{split}
\lim_{n\to \infty}\int P_s(x,\ud y)\charfun{(s,y)\in A_n}
&=\int P_s(x,\ud y)\lim_{n\to \infty}\charfun{(s,y)\in A_n} \\
&=\int P_s(x,\ud y)\charfun{(s,y)\in \bigcup_{n=1}^\infty A_n}.
\end{split}
\end{align}
By applying \eqref{equation:measurable-increasing-limits-1} for the complements of $(B_i)_{i=1}^\infty$, it follows that
\begin{align}\label{equation:measurable-decreasing-limits-1}
\lim_{n\to \infty}\int P_s(x,\ud y)\charfun{(s,y)\in B_n}=\int P_s(x,\ud y)\charfun{(s,y)\in \bigcap_{n=1}^\infty B_n}.
\end{align}
As a limit of $\sigmaSp\otimes\sigmaX$-measurable functions both \eqref{equation:measurable-increasing-limits-1} and \eqref{equation:measurable-decreasing-limits-1} are $\sigmaSp\otimes\sigmaX$-measurable functions and thus $\mathcal{G}$ is a monotone class. By Corollary \ref{corollary: monotone class theorem} we have $\mathcal{G}=\sigmaSp\otimes\sigmaX$.
Therefore, the claim holds for simple functions, and this generalises to positive functions by simple function approximation from below and the monotone convergence theorem. The generalisation to bounded functions follows as usual, considering the positive and negative parts separately.
\end{proof}

\begin{lemma}
   \label{lem:gs-measurability}
Let $\{P_s\}_{s\in\Sp}$ be a family of Markov transition probabilities on $\X$ for which the regularity as noted in Remark \ref{rem:measurability} holds. Let $f\in L^\infty$ and $k\in \N$, then $(s,x)\mapsto P_s^k f(x)$ is bounded $\sigmaSp\otimes\sigmaX$-measurable function, and $x\mapsto P_s^k f(x)$ is bounded $\sigmaX$-measurable function for all $s\in \Sp$.
\end{lemma}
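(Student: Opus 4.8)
The plan is to establish the joint measurability of $(s,x)\mapsto P_s^k f(x)$ by induction on $k$, with Lemma \ref{lem:(s,x)-measurability} as the engine of the inductive step, and then to read off the measurability of the sections $x\mapsto P_s^k f(x)$ as an immediate consequence. The recurring point to watch is that Lemma \ref{lem:(s,x)-measurability} requires a \emph{bounded} integrand, so at every stage I must carry along a uniform bound.

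First I would dispose of the base case. For $k=0$ we have $P_s^0 f(x)=f(x)$, and since $f\in L^\infty$ is bounded and $\sigmaX$-measurable while the coordinate projection $(s,x)\mapsto x$ is $\sigmaSp\otimes\sigmaX/\sigmaX$-measurable, the composition $(s,x)\mapsto f(x)$ is bounded and $\sigmaSp\otimes\sigmaX$-measurable. (A reader who takes $0\notin\N$ may instead start from $k=1$: by Remark \ref{rem:measurability} the map $(s,x)\mapsto P_s(x,A)$ is $\sigmaSp\otimes\sigmaX$-measurable for every $A\in\sigmaX$, so applying Lemma \ref{lem:(s,x)-measurability} to $h(s,y)=f(y)$ gives the claim for $k=1$.)

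For the inductive step, I assume that $h(s,y):=P_s^k f(y)$ is bounded and $\sigmaSp\otimes\sigmaX$-measurable. Boundedness, which is exactly the hypothesis needed to invoke Lemma \ref{lem:(s,x)-measurability}, comes for free: each $P_s$ is a Markov kernel and hence a contraction on $L^\infty$, so $\| P_s^k f \|_\infty \le \| f \|_\infty<\infty$ uniformly in $s$ and $k$. Writing
$$
P_s^{k+1} f(x) = \int P_s(x,\ud y)\, P_s^k f(y) = \int P_s(x,\ud y)\, h(s,y),
$$
Lemma \ref{lem:(s,x)-measurability} immediately yields that $(s,x)\mapsto P_s^{k+1} f(x)$ is bounded and $\sigmaSp\otimes\sigmaX$-measurable, closing the induction.

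Finally, for the second assertion I would invoke the standard fact that every section of a product-measurable function is measurable in the remaining variable: fixing $s\in\Sp$, the map $x\mapsto P_s^k f(x)$ is the $s$-section of the $\sigmaSp\otimes\sigmaX$-measurable function just constructed, hence $\sigmaX$-measurable, and it is bounded by $\| f \|_\infty$. I do not anticipate any genuine obstacle; the proof is a routine induction, and the only step deserving a moment's care is confirming the boundedness hypothesis of Lemma \ref{lem:(s,x)-measurability} at each stage, which the Markov (contraction) property supplies automatically.
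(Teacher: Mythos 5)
Your proof is correct and follows essentially the same route as the paper's: the trivial base case $k=0$, induction via Lemma \ref{lem:(s,x)-measurability} applied to $h(s,y)=P_s^{k}f(y)$, and the section argument for fixed $s$. The extra care you take in verifying the boundedness hypothesis at each stage is a welcome elaboration of a point the paper leaves implicit, but it is not a different argument.
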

\begin{proof}
For the first claim, case $k=0$ is clear, since $(s,x)\mapsto P_s^0 f(x)=f(x)$.
Cases $k\ge 1$ follow from Lemma \ref{lem:(s,x)-measurability} by induction, since $P_s^k f(x)=P_sP_s^{k-1}f(x)$. As a section of a $\sigmaSp\otimes\sigmaX$-measurable function, the mapping $x\mapsto P_s^k f(x)$ is $\sigmaX$-measurable.
\end{proof}

\begin{lemma}
\label{lem:h-measurable}
   Let $h:\Sp\times\X\to\R$ be $\sigmaSp\otimes\sigmaX$-measurable and bounded. Then, under \ref{a:markov} and the regularity as noted in Remark \ref{rem:measurability}
   $$
      \E[h(S_k, X_{k+1})\mid \F_k] \aseq \int P_{S_k}(X_k, \ud y) h(S_k, y).
   $$
\end{lemma}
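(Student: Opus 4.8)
The plan is a standard measure-theoretic bootstrap: establish the identity first for indicators of measurable rectangles, extend it to the generating algebra by linearity, pass to the whole product $\sigma$-algebra by a monotone class argument, and finally lift from indicators to bounded measurable $h$ by simple-function approximation.

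First I would check that both sides are well-defined and $\F_k$-measurable. The right-hand side is the evaluation at $(S_k,X_k)$ of the map $(s,x)\mapsto \int P_s(x,\ud y)\, h(s,y)$, which is $\sigmaSp\otimes\sigmaX$-measurable by Lemma \ref{lem:(s,x)-measurability}; since $(S_k,X_k)$ is $\F_k$-measurable by \ref{a:markov}, the composition is $\F_k$-measurable. Next, for a rectangle $E=A\times B$ with $A\in\sigmaSp$ and $B\in\sigmaX$, take $h(s,x)=\charfun{s\in A}\charfun{x\in B}$. Because $S_k$ is $\F_k$-measurable, $\charfun{S_k\in A}$ may be pulled out of the conditional expectation, and then \ref{a:markov} gives
$$
\E\big[\charfun{S_k\in A}\charfun{X_{k+1}\in B}\mid\F_k\big] \aseq \charfun{S_k\in A}\, P_{S_k}(X_k,B),
$$
which equals $\int P_{S_k}(X_k,\ud y)\, h(S_k,y)$. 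By linearity the identity extends to all $h=\charfun{E}$ with $E$ in the algebra $\mathcal{A}_{\sigmaSp,\sigmaX}$ of finite disjoint unions of rectangles.

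Then I would run the monotone class step. Let $\mathcal{G}$ be the collection of $E\in\sigmaSp\otimes\sigmaX$ for which the identity holds with $h=\charfun{E}$. For an increasing (resp.\ decreasing) sequence $E_n$ in $\mathcal{G}$, conditional monotone convergence passes the limit through $\E[\,\cdot\mid\F_k]$ on the left, while the monotone convergence theorem (for each fixed outcome) passes it through the integral on the right; only countably many null sets accumulate, so $\mathcal{G}$ is a monotone class containing $\mathcal{A}_{\sigmaSp,\sigmaX}$, whence $\mathcal{G}=\sigmaSp\otimes\sigmaX$ by Corollary \ref{corollary: monotone class theorem}. Finally, linearity gives the identity for simple $h$, monotone convergence extends it to nonnegative $h$, and treating positive and negative parts separately handles general bounded $h$.

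The hard part will not be any single computation but the bookkeeping around the joint $(s,x)$-dependence of $h$, which blocks a direct appeal to \ref{a:markov}: the resolution is to freeze the $\F_k$-measurable variable $S_k$, exactly as in the rectangle case. One must also keep in mind that conditional expectations are defined only up to null sets, so the monotone class argument is really carried out at the level of a.s.-equality classes, using that countable unions of null sets are null.
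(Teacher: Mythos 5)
Your proposal is correct and follows essentially the same route as the paper's proof: verify the identity on measurable rectangles by pulling the $\F_k$-measurable factor $\charfun{S_k\in A}$ out of the conditional expectation and invoking \aref{a:markov}, extend by linearity to the algebra $\mathcal{A}_{\sigmaSp,\sigmaX}$, apply the monotone class argument (Corollary \ref{corollary: monotone class theorem}) with monotone convergence on both sides, and finish with simple-function approximation and positive/negative parts. Your explicit remarks on the $\F_k$-measurability of the right-hand side via Lemma \ref{lem:(s,x)-measurability} and on the accumulation of countably many null sets are points the paper leaves implicit, but they do not change the argument.
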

\begin{proof}
Taking the function as indicator function let us denote the collection of $\sigmaSp\otimes\sigmaX$-measurable sets for which the claim holds:
   $$\mathcal{H}=\biggbraces{E\in \sigmaSp\otimes\sigmaX \given \E[\charfun{(S_k,X_{k+1})\in E}\mid \F_k]\aseq \int P_{S_k}(X_k,\ud y)\charfun{(S_k,y)\in E}}.$$
Let $A\in \mathcal{S}$ and $B\in \mathcal{X}$, then by \ref{a:markov} we have almost surely that
   \begin{align*}
   \E[\charfun{(S_k,X_{k+1})\in A\times B}\mid \F_k]
   &= \charfun{S_k\in A}\E[\charfun{X_{k+1}\in B}\mid \F_k] \\
   &= \charfun{S_k\in A}P_{S_k}(X_k,B) \\
   &=\charfun{S_k\in A}\int P_{S_k}(X_k,\ud y)\charfun{y\in B} \\
   &=\int P_{S_k}(X_k,\ud y)\charfun{(S_k,y)\in A\times B}
   \end{align*}
and therefore $A_{\sigmaSp,\sigmaX}\subset \mathcal{H}$ follows from linearity.
Let $A_1\subset A_2,\subset \dots\in \mathcal{H}$ be an increasing sequence of sets and $B_1\supset B_2\supset \dots \in \mathcal{H}$ a decreasing sequence of sets.
We have
   \begin{align}\label{equation:measurable-increasing-limits-2}
   \lim_{n\to \infty} \E\big[\charfun{(S_k,X_{k+1})\in A_n} \mid  \F_k\big]
   \aseq \E\big[\lim_{n\to \infty}\charfun{(S_k,X_{k+1})\in A_n} \mid  \F_k\big]
   \end{align}
   by the monotone convergence theorem and similarly
   \begin{align}\label{equation:measurable-increasing-limits-3}
   \lim_{n\to \infty} \int P_{S_k}(X_k,\ud y) \charfun{(S_k,y)\in A_n}
   =\int P_{S_k}(X_k,\ud y)\lim_{n\to \infty} \charfun{(S_k,y)\in A_n}.
   \end{align}
   Combining \eqref{equation:measurable-increasing-limits-2} and \eqref{equation:measurable-increasing-limits-3} with the definition of $\mathcal{H}$, we obtain
   \begin{align*}
   \E\Bigg[\charfun{(S_k,X_{k+1})\in \bigcup_{n=1}^\infty A_n}\;\Bigg|\;  \F_k\Bigg] 
   \aseq \int P_{S_k}(X_k,\ud y)\charfun{(S_k,y)\in \bigcup_{n=1}^\infty A_n}.
   \end{align*}
By applying the previous equation for the complements of $(B_i)_{i=1}^\infty$, it follows that
   \begin{align*}
   \E\Bigg[\charfun{(S_k,X_{k+1})\in \bigcap_{n=1}^\infty B_n} \;\Bigg|\; \F_k\Bigg]\aseq \int P_{S_k}(X_k,\ud y)\charfun{(S_k,y)\in \bigcap_{n=1}^\infty B_n}.
   \end{align*}
Thus $\mathcal{H}$ is a monotone class and $\mathcal{H}=\sigmaSp\otimes\sigmaX$ by Corollary \ref{corollary: monotone class theorem}. Therefore, the claim holds for simple functions and this can be generalised to any $\sigmaSp\otimes\sigmaX$-measurable bounded function similarly as in the proof of Lemma \ref{lem:(s,x)-measurability}. 
\end{proof}

\section{Results from calculus}
\label{app:calculus}
We state the next simple lemma without proof.
\begin{lemma}\label{lem:convergent-serie-for-convergent-sequence}
Let $(a_i)_{i=1}^\infty$ be a sequence such that $a_i\ge 0$ for all $i$ and $\lim_{i\to \infty}a_i=0$. Then
$$
\frac{1}{n}\sum_{i=1}^n a_i \xrightarrow{n\to \infty}0.
$$
\end{lemma}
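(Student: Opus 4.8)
The plan is to use the standard Ces\`aro mean argument: fix a small threshold, split the sum into a fixed initial block and a tail, and control each piece separately. Since $a_i\ge 0$ here, there is no need for absolute values, which streamlines the estimates slightly.

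First I would fix $\epsilon>0$. Because $a_i\to 0$, there is an index $N$ such that $0\le a_i<\epsilon/2$ for all $i>N$. For any $n>N$ I would then split
$$
\frac{1}{n}\sum_{i=1}^n a_i
= \frac{1}{n}\sum_{i=1}^N a_i
+ \frac{1}{n}\sum_{i=N+1}^n a_i.
$$
The tail is bounded using the threshold: since each of its $n-N$ terms is below $\epsilon/2$, we get $\frac{1}{n}\sum_{i=N+1}^n a_i \le \frac{n-N}{n}\cdot\frac{\epsilon}{2}\le \frac{\epsilon}{2}$. The head has a numerator $S_N=\sum_{i=1}^N a_i$ that is now fixed (it does not depend on $n$), so $S_N/n\to 0$ as $n\to\infty$; hence there is $N'\ge N$ such that $S_N/n<\epsilon/2$ for all $n\ge N'$.

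Combining the two bounds, for every $n\ge N'$ we obtain $\frac{1}{n}\sum_{i=1}^n a_i<\epsilon$, which proves the claim since $\epsilon>0$ was arbitrary. There is no real obstacle in this argument; the only point requiring slight care is the order of quantifiers, namely that $N$ is chosen first to make the tail small and only afterwards is $n$ taken large relative to the now-fixed $N$ to make the head small.
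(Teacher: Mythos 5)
Your proof is correct: it is the standard Ces\`aro mean argument (split at a fixed index $N$ making the tail terms small, then let $n\to\infty$ to kill the fixed head), and the quantifier order is handled properly. The paper deliberately states this lemma without proof as a ``simple'' calculus fact, and your argument is exactly the canonical one the authors have in mind, so there is nothing to compare or correct.
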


Finally, we restate Kronecker's lemma for the reader's convenience.

\begin{lemma}[Kronecker]\label{lemma:Kronecker}
Let $(a_i)_{i=1}^\infty$ be a sequence in $\R$. Let $(c_i)_{i=1}^\infty$ be positive and increasing sequence such that $\lim_{n\to \infty}c_n=\infty$. If
$\sum_{i=1}^\infty \frac{a_i}{c_i}$
is convergent, then we have that
$$
\lim_{n\to \infty}\frac{1}{c_n}\sum_{i=1}^n a_i=0.
$$
\end{lemma}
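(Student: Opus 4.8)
The plan is to prove this by \emph{Abel summation} (summation by parts), which reduces the statement to a Toeplitz/Ces\`aro-type averaging argument. First I would introduce the partial sums of the convergent series: set $b_i = a_i/c_i$ and $S_n = \sum_{i=1}^n b_i$, with the convention $S_0 = 0$, so that by hypothesis $S_n \to S$ for some finite $S\in\R$. Writing $a_i = c_i(S_i - S_{i-1})$, the quantity of interest becomes $\sum_{i=1}^n a_i = \sum_{i=1}^n c_i (S_i - S_{i-1})$.

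Next I would rearrange this sum by summation by parts into
$$
\sum_{i=1}^n c_i (S_i - S_{i-1}) = c_n S_n - \sum_{i=1}^{n-1} (c_{i+1} - c_i) S_i ,
$$
where the boundary term simplifies because $S_0 = 0$. Dividing through by $c_n$ then gives
$$
\frac{1}{c_n}\sum_{i=1}^n a_i = S_n - \frac{1}{c_n}\sum_{i=1}^{n-1}(c_{i+1}-c_i) S_i .
$$
Since $S_n \to S$, it suffices to show that the second term on the right also converges to $S$; the two limits then cancel and the claimed limit is $0$.

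The main work—and the step I expect to be the genuine obstacle—is establishing this last convergence, a weighted-average (Toeplitz) limit. The weights $(c_{i+1}-c_i)/c_n$ are nonnegative because $(c_i)$ is increasing, and they telescope: $\sum_{i=1}^{n-1}(c_{i+1}-c_i) = c_n - c_1$. I would write the error as
$$
\frac{1}{c_n}\sum_{i=1}^{n-1}(c_{i+1}-c_i) S_i - S = \frac{1}{c_n}\sum_{i=1}^{n-1}(c_{i+1}-c_i)(S_i - S) - S\,\frac{c_1}{c_n},
$$
fix $\epsilon>0$, choose $N$ with $|S_i - S| < \epsilon$ for all $i \ge N$, and split the remaining sum at $N$. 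The finite initial block $\frac{1}{c_n}\sum_{i=1}^{N-1}(c_{i+1}-c_i)(S_i-S)$ is a constant (in $n$) divided by $c_n\to\infty$, hence vanishes; the tail block is bounded in absolute value by $\frac{\epsilon}{c_n}\sum_{i=N}^{n-1}(c_{i+1}-c_i) = \epsilon\,\frac{c_n - c_N}{c_n} \le \epsilon$, again using the telescoping of the increments. Letting $n\to\infty$ and then $\epsilon\to 0$ yields convergence of the weighted average to $S$, and combined with $S_n\to S$ this gives $\frac{1}{c_n}\sum_{i=1}^n a_i \to 0$, as required.
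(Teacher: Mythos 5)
Your proof is correct: the summation-by-parts identity is right (the boundary term at $i=0$ indeed drops out because $S_0=0$), and the Toeplitz-type argument for the weighted averages is carried out properly, using the nonnegativity and telescoping of the increments $c_{i+1}-c_i$ together with $c_n\to\infty$ to kill both the initial block and the $S\,c_1/c_n$ term. The paper itself only restates Kronecker's lemma in Appendix~\aref{app:calculus} without proof, so there is no argument to compare against; what you have written is the standard complete proof of this classical result.
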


\section{Results for martingales}
\label{app:martingales}

The following martingale convergence result is well-known, and can be found for instance in \citep[p.~510]{shiryaev}.
\begin{lemma}
   \label{lem:square-martingale-conv}
Let $(M_n)_{n\ge 0}$ be a martingale. If $(M_n)_{n\ge 0}$ is $L^p$ bounded for some $p>1$, that is, $\sup_{n}\E[|M_n|^p]<\infty$, then there exsts an a.s.~finite random number $M_\infty$ such that $M_n\to M_\infty$ a.s.
\end{lemma}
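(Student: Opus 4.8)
The plan is to reduce the $L^p$ hypothesis to $L^1$-boundedness and then establish the almost sure convergence by the classical martingale convergence argument. First I would note that for $p>1$, Jensen's (equivalently Lyapunov's) inequality gives $\E[|M_n|] \le \big(\E[|M_n|^p]\big)^{1/p}$, and hence $\sup_n \E[|M_n|] \le \big(\sup_n \E[|M_n|^p]\big)^{1/p} < \infty$. Thus $(M_n)_{n\ge 0}$ is $L^1$-bounded. The stronger $L^p$ assumption is not needed for mere almost sure convergence; it would only additionally guarantee uniform integrability and hence convergence in $L^1$, which we do not require here.

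The core of the argument is that an $L^1$-bounded martingale converges almost surely to an integrable limit. To keep the proof self-contained I would prove this via Doob's upcrossing inequality. Fix rationals $a<b$ and let $U_n[a,b]$ denote the number of upcrossings of the interval $[a,b]$ performed by $M_0,\ldots,M_n$. The upcrossing inequality asserts $(b-a)\,\E[U_n[a,b]] \le \E[|M_n - a|] \le |a| + \E[|M_n|]$, which I would obtain from the usual ``buy low, sell high'' predictable strategy: the gains of this bounded predictable transform of $(M_n)$ form a zero-mean martingale, while each completed upcrossing contributes at least $b-a$ to those gains.

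Letting $n\to\infty$ and applying monotone convergence, the total number of upcrossings $U_\infty[a,b]$ has expectation bounded by $(b-a)^{-1}(|a| + \sup_n\E[|M_n|]) < \infty$, so $U_\infty[a,b] < \infty$ almost surely. Taking the countable union of the exceptional null sets over all rational pairs $a<b$, almost surely $(M_n)$ upcrosses no such interval infinitely often, which forces $\liminf_n M_n = \limsup_n M_n$; that is, $M_n \to M_\infty$ almost surely for some $M_\infty$ with values in $[-\infty,\infty]$. Finiteness of the limit then follows from Fatou's lemma, $\E[|M_\infty|] \le \liminf_n \E[|M_n|] \le \sup_n \E[|M_n|] < \infty$, so $M_\infty$ is integrable and in particular almost surely finite.

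The main obstacle is the upcrossing inequality itself, namely the careful bookkeeping of the predictable stopping-time strategy and the verification that its discrete stochastic integral against the martingale has nonnegative expectation. The remaining steps---the Jensen reduction, the monotone-convergence passage to $U_\infty$, and the Fatou finiteness step---are routine. Since the result is entirely standard, one could alternatively simply cite it, as the excerpt in fact does.
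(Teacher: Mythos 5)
Your proof is correct. Note that the paper does not actually prove this lemma: it is stated in Appendix \ref{app:martingales} as a standard fact with a citation to Shiryaev, so there is no in-paper argument to compare against. What you give is the classical Doob route: the Lyapunov/Jensen reduction $\sup_n\E[|M_n|]\le(\sup_n\E[|M_n|^p])^{1/p}<\infty$ correctly shows that only $L^1$-boundedness is needed for almost sure convergence (the exponent $p>1$ in the statement is there because it is what Lemma \ref{lem:control-RM} actually verifies, via $L^2$-boundedness, not because it is needed for this conclusion); the upcrossing inequality in the form $(b-a)\,\E[U_n[a,b]]\le|a|+\E[|M_n|]$ is a valid, if slightly loose, version of Doob's bound; the passage to $U_\infty[a,b]$ by monotone convergence, the union over rational pairs, and the Fatou step giving $\E[|M_\infty|]<\infty$ are all standard and correctly assembled. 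The one piece you leave as a black box is the upcrossing inequality itself, which you flag honestly; filling it in is routine bookkeeping with the ``buy low, sell high'' predictable transform. In short, your write-up is a complete and correct self-contained proof of a result the paper merely quotes, and your closing remark that one could simply cite it matches what the authors in fact do.
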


The following martingale central limit is a restatement of \citep[Corollary 3.1]{hall-heyde}.

\begin{theorem}
   \label{thm:martingale-clt}
Let $(M_n)_{n\ge 0}$ be a martingale with respect to $(\F_k)_{k\ge 0}$, such that there exists $\sigma^2\in(0,\infty)$ such that the differences $\Delta_k = M_k - M_{k-1}$ satisfy the following for all $\epsilon>0$: 
\begin{align*}
   \frac{1}{n}\sum_{k=1}^n \E[\Delta_k^2\mid \F_{k-1}] &\xrightarrow{n\to\infty} \sigma^2, \\
   \frac{1}{n} \sum_{k=1}^n \E[\Delta_k^2 1(|\Delta_k| \ge \epsilon \sqrt{n})\mid \F_{k-1}] &\xrightarrow{n\to\infty} 0,
\end{align*}
in probability, then, $n^{-1/2} M_n \to N(0,\sigma^2)$ in distribution as $n\to\infty$.
\end{theorem}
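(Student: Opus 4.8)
The statement is the classical Lindeberg-type martingale central limit theorem, so the plan is to reproduce a characteristic-function argument adapted to the martingale-difference structure. By L\'evy's continuity theorem it suffices to show, for each fixed $t\in\R$, that $\E[\exp(itM_n/\sqrt n)]\to\exp(-\sigma^2 t^2/2)$. Writing $U_{nk}=\Delta_k/\sqrt n$, the two hypotheses become the conditional-variance condition $\sum_{k=1}^n\E[U_{nk}^2\mid\F_{k-1}]\to\sigma^2$ and the conditional Lindeberg condition $\sum_{k=1}^n\E[U_{nk}^2\mathbf 1(|U_{nk}|\ge\epsilon)\mid\F_{k-1}]\to0$, both in probability; together these also force the asymptotic negligibility $\max_{k\le n}|U_{nk}|\to0$ in probability.

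The engine of the proof is the complex exponential martingale. For each fixed $n$, set
$$ \mathcal E_n(t)=\frac{\exp(itM_n/\sqrt n)}{\prod_{k=1}^n \E[\exp(itU_{nk})\mid\F_{k-1}]}, $$
which, viewed as a process in the within-row index, is a martingale of mean one, $\E[\mathcal E_n(t)]=1$, provided the denominator stays bounded away from $0$. The first main step is to show the denominator converges in probability to the deterministic constant $\exp(-\sigma^2 t^2/2)$. For this I would Taylor-expand each factor, using $\E[U_{nk}\mid\F_{k-1}]=0$, to write $\E[\exp(itU_{nk})\mid\F_{k-1}]=1-\tfrac{t^2}{2}\E[U_{nk}^2\mid\F_{k-1}]+r_{nk}$, and bound $\sum_k|r_{nk}|$ via the Lindeberg condition, splitting over $\{|U_{nk}|<\epsilon\}$ and its complement and using $|e^{ix}-1-ix+\tfrac{x^2}{2}|\le\min(|x|^2,|x|^3)$. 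Summing logarithms, with $\log(1+z)=z+O(|z|^2)$ justified by the negligibility of $\max_k|U_{nk}|$, then yields $\sum_k\log\E[\exp(itU_{nk})\mid\F_{k-1}]\to-\tfrac{\sigma^2 t^2}{2}$ in probability.

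The second main step converts $\E[\mathcal E_n(t)]=1$ into convergence of $\E[\exp(itM_n/\sqrt n)]$: since $\exp(itM_n/\sqrt n)=\mathcal E_n(t)\prod_{k=1}^n\E[\exp(itU_{nk})\mid\F_{k-1}]$ and the product tends in probability to a constant, it suffices that $\mathcal E_n(t)$ be uniformly integrable with limiting mean one. This control of the \emph{random} denominator is, I expect, the genuine obstacle: the factors can in principle be small, so one typically first truncates the differences at level $\epsilon\sqrt n$ (or stops the row when $\sum_k\E[U_{nk}^2\mid\F_{k-1}]$ first exceeds $\sigma^2+\delta$) to make $|\mathcal E_n(t)|$ bounded, argues that truncation changes $M_n/\sqrt n$ negligibly in probability, and passes to the limit. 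A clean way to handle the ``in probability'' hypotheses is to extract from any subsequence a further subsequence along which both conditions hold almost surely, carry out the deterministic expansion there, and conclude by the subsequence criterion for convergence in distribution.

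As an alternative one could give a Skorokhod-embedding proof: represent $M_n\overset{d}{=}B_{T_n}$ for a Brownian motion $B$ and embedding times with $\E[\tau_k\mid\F_{k-1}]=\E[\Delta_k^2\mid\F_{k-1}]$, so that $T_n/n\to\sigma^2$ and $B_{T_n}/\sqrt n\to N(0,\sigma^2)$ by path continuity; there the technical cost is transferred to controlling the embedding times, which again leans on the Lindeberg condition. Either route reduces the theorem to the same core difficulty of managing the fluctuations of the random normalisation, and since $\sigma^2>0$ no degenerate case needs separate treatment.
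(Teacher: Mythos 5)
The paper does not prove this theorem at all: it is stated in Appendix \ref{app:martingales} as a verbatim restatement of \citep[Corollary 3.1]{hall-heyde} and simply cited. So there is no in-paper argument to compare against; what you have written is an outline of the classical characteristic-function proof, which is essentially the route Hall and Heyde themselves take (truncation of the differences, an exponential-martingale identity, a conditional Taylor expansion of $\E[e^{itU_{nk}}\mid\F_{k-1}]$, and the conditional Lindeberg condition to kill the remainders). As a roadmap it is accurate, and you correctly identify where the real difficulty lies.

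As a proof, however, it has a genuine gap, and it is exactly the one you flag yourself: the step from $\E[\mathcal E_n(t)]=1$ to $\E[e^{itM_n/\sqrt n}]\to e^{-\sigma^2t^2/2}$ is only described, not executed. Without modification, $\mathcal E_n(t)$ need not be a mean-one martingale (the conditional characteristic functions in the denominator can vanish or come arbitrarily close to zero, destroying both the definition and the integrability of $\mathcal E_n(t)$), so the identity $\E[\mathcal E_n(t)]=1$ and the uniform integrability you invoke are precisely what must be manufactured by the truncation/stopping construction. Two sub-points there require care and are not supplied: (i) showing that replacing $\Delta_k$ by $\Delta_k\mathbf 1(|\Delta_k|<\epsilon\sqrt n)$ recentred to be a martingale difference changes $M_n/\sqrt n$ negligibly cannot be done by a naive Markov bound, because the Lindeberg condition is assumed only in probability, not in expectation --- this is why one additionally stops the row when the cumulative conditional variance first exceeds $\sigma^2+\delta$, so as to get $L^1$/$L^2$ control; and (ii) after truncation one still has to verify that the modified array satisfies the same two hypotheses before running the Taylor expansion. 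None of this is wrong in spirit --- it is the standard proof --- but in its current form the argument is a plan whose decisive step is deferred, which is presumably why the paper outsources the result to \citep{hall-heyde} rather than proving it. If you want a self-contained treatment, either carry out the stopping construction in full or follow Hall--Heyde's Theorem 3.2 and deduce the corollary from it.
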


\end{document}